\documentclass[11pt,reqno]{amsart}
\usepackage{preprintStyle}

\usepackage{adjustbox}
\usepackage{makecell}
\usepackage{float}

\allowdisplaybreaks 

\usetikzlibrary{fit, angles, quotes,arrows, external}

\usepackage{xspace}

\newcommand{\abbr}[1]{\textsf{#1}\xspace}

\newcommand{\calA}{\mathcal{A}}
\newcommand{\calB}{\mathcal{B}}

\newcommand{\calF}{\mathcal{F}}

\newcommand{\calI}{\mathcal{I}}

\newcommand{\calK}{\mathcal{K}}
\newcommand{\calL}{\mathcal{L}}

\newcommand{\N}{\ensuremath\mathbb{N}}

\newcommand{\R}{\ensuremath\mathbb{R}}
\newcommand{\C}{\ensuremath\mathbb{C}}
\newcommand{\im}{\ensuremath{i}}

\newcommand{\diff}{\mathrm{d}}
\newcommand{\dr}{\,\diff r}
\newcommand{\ds}{\,\diff s}
\newcommand{\dt}{\,\diff t}

\newcommand{\dz}{\,\diff z}

\newcommand{\Linfty}{L^\infty}

\DeclareMathOperator{\range}{im}

\newcommand{\ISS}{\abbr{ISS}}

\newcommand{\PDEs}{\abbr{PDEs}}
\newcommand{\SOT}{\abbr{SOT}}

\newcommand{\NT}{\abbr{NT}}
\newcommand{\PINN}{\abbr{PINN}}
\newcommand{\PINNs}{\abbr{PINNs}}

\newcommand{\cs}{\Sigma}
\newcommand{\bcs}{\Sigma_\partial}

\newcommand{\inp}{u}
\newcommand{\inpdt}{\dot{u}}
\newcommand{\inpn}{u_n}

\newcommand{\state}{x}
\newcommand{\statedt}{\dot{\state}}
\newcommand{\stateinit}{\state_0}

\newcommand{\staten}{\state_n}
\newcommand{\statenidx}[1]{\state_{n, #1}}
\newcommand{\statendt}{\dot{\state}_n}

\newcommand{\stateCn}{\tilde{x}_n}
\newcommand{\stateDn}{\staten}
\newcommand{\stateEn}{\tilde{z}_n}
\newcommand{\stateFn}{z_n}

\newcommand{\stateCndt}{\dot{\tilde{x}}_n}
\newcommand{\stateDndt}{\dot{x}_n}
\newcommand{\stateEndt}{\dot{\tilde{z}}_n}
\newcommand{\stateFndt}{\dot{z}_n}

\newcommand{\coord}{\xi}
\newcommand{\deltacoord}{\Delta_{\coord, n}}
\newcommand{\spaceVar}{\xi}

\newcommand{\unitvector}[1]{\mathbf{e}_{#1}}

\newcommand{\diffconst}{\alpha}

\newcommand{\linop}[2]{\mathfrak{L}({#1},{#2})} 
\newcommand{\linopo}[1]{\mathfrak{L}({#1})} 
\newcommand{\ZU}{\Linfty(0,\infty; U)}
\newcommand{\ZUt}{\Linfty(0,t; U)}
\newcommand{\BIP}{\abbr{BIP}}
\newcommand{\Sect}{\abbr{Sect}}

\newcommand{\Xn}{X^n}

\newcommand{\tildeXn}{\tilde{X}^n}
\newcommand{\Xmin}{X_{-1}}
\newcommand{\Xminalpha}{X_{-1+\alpha}}
\newcommand{\Xalpha}{X_{\alpha}}
\newcommand{\InpSpace}{U}
\newcommand{\InpnSpace}{U_n}

\newcommand{\mexp}[0]{\mathrm{e}}
\newcommand{\timeInt}{\mathbb{T}}

\newcommand{\Bshort}{{\mathcal{B}}}
\newcommand{\Bnshort}{{\mathcal{B}}_n}

\newcommand{\bop}{\mathfrak{D}}
\newcommand{\boprinv}{\mathfrak{D}_0}

\newcommand{\bopn}{\mathfrak{D}_n}
\newcommand{\bopnrinv}{\mathfrak{D}_{n, 0}}
\newcommand{\bopnrinvU}{\mathfrak{D}^U_{n, 0}}

\newcommand{\Ainit}{\mathfrak{A}}
\newcommand{\Abase}{\mathcal{A}}
\newcommand{\Amin}{\mathcal{A}_{-1}}

\newcommand{\Aninit}{\mathfrak{A}_n}

\newcommand{\Anbase}{\mathcal{A}_n}

\newcommand{\Anmin}{\mathcal{A}_{n,-1}}

\newcommand{\res}[2]{R({#1}, {#2})}

\renewcommand{\S}{\mathcal{S}}
\newcommand{\Sn}{\mathcal{S}_n}
\newcommand{\Smin}{\mathcal{S}_{-1}}
\newcommand{\Snmin}{\mathcal{S}_{n,-1}}

\newcommand{\Id}{\mathcal{I}}

\newcommand{\IdU}{\mathcal{I}_{U}}

\newcommand{\En}{\mathcal{E}_n}
\newcommand{\Enmin}{\mathcal{E}_{n, -1}}
\newcommand{\tildeEn}{\tilde{\mathcal{E}}_n}
\newcommand{\mue}{\mu_\mathrm{e}}

\newcommand{\Pn}{\mathcal{P}_n}

\newcommand{\tildePn}{\tilde{\mathcal{P}}_n}
\newcommand{\mup}{\mu_\mathrm{p}}

\newcommand{\PnU}{\mathcal{P}^U_n}

\ifdefined\Rn
    \renewcommand{\Rn}{\mathcal{R}_n}
\else
    \newcommand{\Rn}{\mathcal{R}_n}
\fi

\newcommand{\Fn}{\mathcal{F}_n}

\newcommand{\descMatrix}{\mathcal{Q}_n}

\newcommand{\flowx}[3]{x({#1}; {#2}, {#3})}

\newcommand{\narrowinfty}{\xrightarrow[]{n\rightarrow\infty}}

\newcommand{\narrowinftySOT}[2]{\xrightarrow[\SOT(#1, #2)]{n\rightarrow\infty}}

\newcommand{\narrowinftyNT}[2]{\xrightarrow[\NT(#1, #2)]{n\rightarrow\infty}}

\newcommand{\sectorbound}{\varsigma}

\newcommand{\sectorboundmoved}{\alpha}

\newcommand{\othersector}{S}

\newcommand{\dlambda}{\, \mathrm{d}\lambda}
\renewcommand{\dr}{\, \mathrm{d}r}
\renewcommand{\path}{\Gamma}

\newcommand{\lineWidth}{1.2pt}

\newcommand{\sysA}{\abbr{(A)}}
\newcommand{\sysB}{\abbr{(B)}}
\newcommand{\sysC}{\abbr{(C)}}
\newcommand{\sysD}{\abbr{(D)}}

\newcommand{\sysF}{\abbr{(F)}}

\definecolor{kit-green}{RGB}{0, 150, 130}
\colorlet{kit-green100}{kit-green}
\colorlet{kit-green90}{kit-green!90!white}
\colorlet{kit-green80}{kit-green!80!white}
\colorlet{kit-green70}{kit-green!70!white}
\colorlet{kit-green60}{kit-green!60!white}
\colorlet{kit-green50}{kit-green!50!white}
\colorlet{kit-green40}{kit-green!40!white}
\colorlet{kit-green30}{kit-green!30!white}
\colorlet{kit-green25}{kit-green!25!white}
\colorlet{kit-green20}{kit-green!20!white}
\colorlet{kit-green15}{kit-green!15!white}
\colorlet{kit-green10}{kit-green!10!white}
\colorlet{kit-green5}{kit-green!5!white}

\definecolor{kit-blue}{RGB}{70, 100, 170}
\colorlet{kit-blue100}{kit-blue}
\colorlet{kit-blue90}{kit-blue!90!white}
\colorlet{kit-blue80}{kit-blue!80!white}
\colorlet{kit-blue70}{kit-blue!70!white}
\colorlet{kit-blue60}{kit-blue!60!white}
\colorlet{kit-blue50}{kit-blue!50!white}
\colorlet{kit-blue40}{kit-blue!40!white}
\colorlet{kit-blue30}{kit-blue!30!white}
\colorlet{kit-blue25}{kit-blue!25!white}
\colorlet{kit-blue20}{kit-blue!20!white}
\colorlet{kit-blue15}{kit-blue!15!white}
\colorlet{kit-blue10}{kit-blue!10!white}
\colorlet{kit-blue5}{kit-blue!5!white}

\definecolor{kit-royalblue}{RGB}{0, 45, 76}
\colorlet{kit-royalblue100}{kit-royalblue}
\colorlet{kit-royalblue90}{kit-royalblue!90!white}
\colorlet{kit-royalblue80}{kit-royalblue!80!white}
\colorlet{kit-royalblue70}{kit-royalblue!70!white}
\colorlet{kit-royalblue60}{kit-royalblue!60!white}
\colorlet{kit-royalblue50}{kit-royalblue!50!white}
\colorlet{kit-royalblue40}{kit-royalblue!40!white}
\colorlet{kit-royalblue30}{kit-royalblue!30!white}
\colorlet{kit-royalblue25}{kit-royalblue!25!white}
\colorlet{kit-royalblue20}{kit-royalblue!20!white}
\colorlet{kit-royalblue15}{kit-royalblue!15!white}
\colorlet{kit-royalblue10}{kit-royalblue!10!white}
\colorlet{kit-royalblue5}{kit-royalblue!5!white}

\definecolor{kit-iceblue100}{RGB}{30, 53, 69}
\definecolor{kit-iceblue70}{RGB}{68, 94, 111}
\definecolor{kit-iceblue50}{RGB}{168, 185, 196}
\definecolor{kit-iceblue30}{RGB}{218, 225, 230}

\definecolor{kit-red}{RGB}{162, 34, 35}
\colorlet{kit-red100}{kit-red}
\colorlet{kit-red90}{kit-red!90!white}
\colorlet{kit-red80}{kit-red!80!white}
\colorlet{kit-red70}{kit-red!70!white}
\colorlet{kit-red60}{kit-red!60!white}
\colorlet{kit-red50}{kit-red!50!white}
\colorlet{kit-red40}{kit-red!40!white}
\colorlet{kit-red30}{kit-red!30!white}
\colorlet{kit-red25}{kit-red!25!white}
\colorlet{kit-red20}{kit-red!20!white}
\colorlet{kit-red15}{kit-red!15!white}
\colorlet{kit-red10}{kit-red!10!white}
\colorlet{kit-red5}{kit-red!5!white}

\definecolor{kit-yellow}{RGB}{252, 229, 0}
\colorlet{kit-yellow100}{kit-yellow}
\colorlet{kit-yellow90}{kit-yellow!90!white}
\colorlet{kit-yellow80}{kit-yellow!80!white}
\colorlet{kit-yellow70}{kit-yellow!70!white}
\colorlet{kit-yellow60}{kit-yellow!60!white}
\colorlet{kit-yellow50}{kit-yellow!50!white}
\colorlet{kit-yellow40}{kit-yellow!40!white}
\colorlet{kit-yellow30}{kit-yellow!30!white}
\colorlet{kit-yellow25}{kit-yellow!25!white}
\colorlet{kit-yellow20}{kit-yellow!20!white}
\colorlet{kit-yellow15}{kit-yellow!15!white}
\colorlet{kit-yellow10}{kit-yellow!10!white}
\colorlet{kit-yellow5}{kit-yellow!5!white}

\definecolor{kit-orange}{RGB}{223, 155, 27}
\colorlet{kit-orange100}{kit-orange}
\colorlet{kit-orange90}{kit-orange!90!white}
\colorlet{kit-orange80}{kit-orange!80!white}
\colorlet{kit-orange70}{kit-orange!70!white}
\colorlet{kit-orange60}{kit-orange!60!white}
\colorlet{kit-orange50}{kit-orange!50!white}
\colorlet{kit-orange40}{kit-orange!40!white}
\colorlet{kit-orange30}{kit-orange!30!white}
\colorlet{kit-orange25}{kit-orange!25!white}
\colorlet{kit-orange20}{kit-orange!20!white}
\colorlet{kit-orange15}{kit-orange!15!white}
\colorlet{kit-orange10}{kit-orange!10!white}
\colorlet{kit-orange5}{kit-orange!5!white}

\definecolor{kit-lightgreen}{RGB}{140, 182, 60}
\colorlet{kit-lightgreen100}{kit-lightgreen}
\colorlet{kit-lightgreen90}{kit-lightgreen!90!white}
\colorlet{kit-lightgreen80}{kit-lightgreen!80!white}
\colorlet{kit-lightgreen70}{kit-lightgreen!70!white}
\colorlet{kit-lightgreen60}{kit-lightgreen!60!white}
\colorlet{kit-lightgreen50}{kit-lightgreen!50!white}
\colorlet{kit-lightgreen40}{kit-lightgreen!40!white}
\colorlet{kit-lightgreen30}{kit-lightgreen!30!white}
\colorlet{kit-lightgreen25}{kit-lightgreen!25!white}
\colorlet{kit-lightgreen20}{kit-lightgreen!20!white}
\colorlet{kit-lightgreen15}{kit-lightgreen!15!white}
\colorlet{kit-lightgreen10}{kit-lightgreen!10!white}
\colorlet{kit-lightgreen5}{kit-lightgreen!5!white}

\definecolor{kit-purple}{RGB}{163, 16, 124}
\colorlet{kit-purple100}{kit-purple}
\colorlet{kit-purple90}{kit-purple!90!white}
\colorlet{kit-purple80}{kit-purple!80!white}
\colorlet{kit-purple70}{kit-purple!70!white}
\colorlet{kit-purple60}{kit-purple!60!white}
\colorlet{kit-purple50}{kit-purple!50!white}
\colorlet{kit-purple40}{kit-purple!40!white}
\colorlet{kit-purple30}{kit-purple!30!white}
\colorlet{kit-purple25}{kit-purple!25!white}
\colorlet{kit-purple20}{kit-purple!20!white}
\colorlet{kit-purple15}{kit-purple!15!white}
\colorlet{kit-purple10}{kit-purple!10!white}
\colorlet{kit-purple5}{kit-purple!5!white}

\definecolor{kit-brown}{RGB}{167, 130, 46}
\colorlet{kit-brown100}{kit-brown}
\colorlet{kit-brown90}{kit-brown!90!white}
\colorlet{kit-brown80}{kit-brown!80!white}
\colorlet{kit-brown70}{kit-brown!70!white}
\colorlet{kit-brown60}{kit-brown!60!white}
\colorlet{kit-brown50}{kit-brown!50!white}
\colorlet{kit-brown40}{kit-brown!40!white}
\colorlet{kit-brown30}{kit-brown!30!white}
\colorlet{kit-brown25}{kit-brown!25!white}
\colorlet{kit-brown20}{kit-brown!20!white}
\colorlet{kit-brown15}{kit-brown!15!white}
\colorlet{kit-brown10}{kit-brown!10!white}
\colorlet{kit-brown5}{kit-brown!5!white}

\definecolor{kit-cyan}{RGB}{35, 161, 224}
\colorlet{kit-cyan100}{kit-cyan}
\colorlet{kit-cyan90}{kit-cyan!90!white}
\colorlet{kit-cyan80}{kit-cyan!80!white}
\colorlet{kit-cyan70}{kit-cyan!70!white}
\colorlet{kit-cyan60}{kit-cyan!60!white}
\colorlet{kit-cyan50}{kit-cyan!50!white}
\colorlet{kit-cyan40}{kit-cyan!40!white}
\colorlet{kit-cyan30}{kit-cyan!30!white}
\colorlet{kit-cyan25}{kit-cyan!25!white}
\colorlet{kit-cyan20}{kit-cyan!20!white}
\colorlet{kit-cyan15}{kit-cyan!15!white}
\colorlet{kit-cyan10}{kit-cyan!10!white}
\colorlet{kit-cyan5}{kit-cyan!5!white}

\definecolor{kit-gray}{RGB}{0, 0, 0}
\colorlet{kit-gray100}{kit-gray}
\colorlet{kit-gray90}{kit-gray!90!white}
\colorlet{kit-gray80}{kit-gray!80!white}
\colorlet{kit-gray70}{kit-gray!70!white}
\colorlet{kit-gray60}{kit-gray!60!white}
\colorlet{kit-gray50}{kit-gray!50!white}
\colorlet{kit-gray40}{kit-gray!40!white}
\colorlet{kit-gray30}{kit-gray!30!white}
\colorlet{kit-gray25}{kit-gray!25!white}
\colorlet{kit-gray20}{kit-gray!20!white}
\colorlet{kit-gray15}{kit-gray!15!white}
\colorlet{kit-gray10}{kit-gray!10!white}
\colorlet{kit-gray5}{kit-gray!5!white}

\title[Estimation of ISS gain functions from finite-dimensional approximations]{Estimation of the input-to-state stability gain functions from finite-dimensional approximations}

\author[B.~Hillebrecht, B.~Unger]{Birgit Hillebrecht${}^{\star,\dagger}$ \and Benjamin Unger${}^{\star,\ddagger}$}

\address{${}^{\star}$ Institute for Applied and Numerical Mathematics, Karlsruhe Institute of Technology, Karlsruhe, 76131, Germany.}

\address{${}^{\dagger}$ ORCID: 0000-0001-5361-0505}
\email{birgit.hillebrecht@kit.edu}

\address{${}^{\ddagger}$ ORCID: 0000-0003-4272-1079}
\email{benjamin.unger@kit.edu}

\begin{document}

\begin{abstract}
	Since the concept of input-to-state stability (ISS) was introduced, it has been extensively investigated for finite-dimensional control systems and has recently received attention for infinite-dimensional systems. While numerical techniques provide a bridge between these two worlds, a rigorous connection between the ISS of an infinite-dimensional system with an unbounded control operator and the properties of its finite-dimensional approximations has not yet been established. 
  In this manuscript, we make a first step towards closing this gap by investigating numerical approximations of linear (boundary) control systems using semigroup theory. Specifically, we focus on linear boundary control systems where the autonomous evolution is governed by an analytic semigroup. For these systems, we show that ISS gains can be computed from approximations. We illustrate the applicability of these findings using a one-dimensional heat equation with Dirichlet boundary control for which reference ISS gains are known.
\end{abstract}

\maketitle
{\footnotesize \textsc{Keywords:} Control system, boundary control, finite-dimensional approximation, input-to-state stability, Trotter--Kato, spatial discretization, analytic semigroups}

{\footnotesize \textsc{AMS subject classification:} 37M99,93C05,93C20,93D20}
%


\section{Introduction}
\label{sec:introduction}

Estimating system states in the presence of external inputs is a central problem in control theory, which motivated the concept of input-to-state stability (\ISS) \cite{Son89}. \ISS has been particularly well studied for finite-dimensional systems \cite{SonW95, Son08} while the interest for infinite-dimensional systems has grown recently \cite{JacNPS16,JacNPS18,MirP20,Sch20,KarK19}. The numerical analysis link between the two settings has been explored for bounded control operators \cite{BanW89}, but remains unstudied for unbounded linear control operators. Hence, approximating infinite-dimensional \ISS gain functions by means of finite-dimensional discretizations is still an open problem, in particular, when the control acts on the boundary only. This manuscript presents the first rigorous framework that connects numerical analysis and \ISS theory for a specific class of parabolic equations with unbounded control operators.

In more detail, we consider abstract infinite-dimensional control systems of the form 
\begin{equation}
    \label{eq:cs_base}
    \left\{\quad\begin{aligned}
        \statedt(t) &= \Abase \state(t) + \Bshort \inp(t), \\
        \state(0) & = \stateinit,
    \end{aligned}\right.
\end{equation}
with state $\state(t) \in X$ and input $\inp(t) \in \InpSpace$. The state space $X$ is a Banach space, the input space $U$ is a normed vector space, the operator $\Abase \colon D(\Abase)\subseteq X\longrightarrow X$ generates an analytic semigroup, and the (generally unbounded) operator $\calB\colon U\longrightarrow X$ models how the input impacts the state; the precise degree of unboundedness of \(\mathcal B\) will be specified in the forthcoming \Cref{subsec:control-systems}. Systems with unbounded control operators $\Bshort$ naturally appear when transforming partial differential equations (\PDEs) with boundary control, i.e., systems of the form (cf.~\cite[eq.~10.1]{CurZ20})
\begin{equation}
    \label{eq:bcs_base_intro}
    \left\{ \quad \begin{aligned}
        \statedt(t) &= \Ainit \state(t), \\
        \bop \state(t) &= \inp(t),\\
        \state(0)&= \stateinit,
    \end{aligned}\right.
\end{equation}
where $\bop$ is a suitable trace operator, into the form \eqref{eq:cs_base} \cite[Prop.\,2.8,~Part\,4]{Sch20}. In the following, we refer to~\eqref{eq:bcs_base_intro} as a \emph{boundary control system}. 

In this work, we study $\Linfty$-\ISS for both aforementioned systems~\eqref{eq:cs_base} and~\eqref{eq:bcs_base_intro}. Specifically, we seek to quantify functions $\beta$ and $\gamma$ in suitable function spaces (see the forthcoming \Cref{subsec:control-systems} for details), so-called \ISS gains, such that the state $x(t)$ satisfies the estimate
\begin{equation}
    \label{eq:iss}
    \|x (t)\|_X \le \beta(\|x_0\|_X, t) + \gamma (\|u\|_{\ZUt})
\end{equation}
for all $t\geq 0$.

Our interest in quantifying suitable \ISS gains is motivated by the posteriori error bound for physics-informed neural networks (\PINNs) derived by the authors in \cite{HilU25a}. In that setting, the input $\inp$ is not a control variable but is induced by the mismatch between the \PINN prediction and the prescribed boundary condition. This boundary error then has to be reflected in the overall prediction error of the \PINN, which can be bounded by the \ISS gain~$\gamma$. Consequently, the practical use of the error estimator hinges on the availability of a valid gain function~$\gamma$. At present, such gain functions are typically available only from rigorous analytical results for problems posed on simple geometries, such as intervals or rectangular domains, which renders the error bound inapplicable to many engineering problems. A first step toward overcoming this limitation was taken in~\cite{HilU25b}, where a computable bound resembling the \ISS estimate~\eqref{eq:iss} was introduced. However, this approach required the input~$\inp$, i.e., the boundary error, to be differentiable with respect to time. This paper eliminates the differentiability requirement and reinstates the original \ISS estimate, thereby broadening the applicability of the theory.

With this motivation in mind, we deliberately avoid relying on explicit knowledge of functional-analytic properties that are often difficult to obtain for problems defined on complicated geometries. These properties include, for example, the spectrum of the operator $\Abase$ or the extrapolated operator $\Amin$. Instead, we focus on strategies to retrieve $\beta$ and $\gamma$ via finite-dimensional approximations of the form
\begin{equation}\label{eq:cs_approx}
    \left\{\quad\begin{aligned}
        \statendt(t)& =\Anbase \staten(t) + \Bnshort \inp(t), \\
        \staten(0)& = \Pn \stateinit,
    \end{aligned}\right.
\end{equation}
with approximate state $\staten(t) \in \Xn$ evolves in a Banach space $\Xn$, $\Pn\colon X\to\Xn$ maps an element $\state \in X$ into the discretized space $\Xn$, and $\Bnshort$ models the impact of the input $\inp$ on the evolution of the state. 

Our main contributions are the following:
\begin{enumerate}
	\item We first focus on the estimation of \ISS gains for~\eqref{eq:cs_base} when the operator $\Bshort \colon \InpSpace \rightarrow X$ is known and bounded into the fractional power space $\Xminalpha$ for some $\alpha \in (0,1)$. We show in \Cref{thm:bound_cont_approx} that under the assumptions stated in \Cref{problem:cs}, the \ISS gains can be taken as suitable limits from the \ISS gains of the finite-dimensional approximation~\eqref{eq:cs_approx}.
	\item We then focus on the link between~\eqref{eq:bcs_base_intro} and~\eqref{eq:cs_base}, i.e., to settings where $\Bshort$ is not explicitly available; cf.~\Cref{problem:bcs}. By introducing a pre-boundary closure discretization, we extend the results from \Cref{thm:dist_contr} to boundary control approximations in \Cref{thm:bound_cont_approx}. We emphasize that for this step, we require the underlying Banach spaces to be complex.
\end{enumerate}
Finally, in \Cref{sec:numerics}, we apply these results to the one-dimensional heat equation with Dirichlet boundary control and compare the gain functions determined via finite-dimensional approximations with the theoretical gain functions available in the literature.

\subsection*{Organization of the manuscript}
After this introduction, we review concepts from functional analysis, abstract control systems, approximation theory in the sense of Trotter--Kato, and the precise problem statements in \Cref{sec:preliminaries}. The estimation of the \ISS gains for~\eqref{eq:cs_base} and~\eqref{eq:bcs_base_intro} are discussed in \Cref{sec:distributed-control-results,sec:boundary-control-results}, respectively, relying on a general known approximation result presented in the end of \Cref{sec:preliminaries}. We apply the theoretical results to a one-dimensional heat equation in~\Cref{sec:numerics} and conclude the paper with a summary and discussion in \Cref{sec:discussion}.

\subsection*{Notation}
We denote the space of bounded linear operators between  Banach spaces X and Y by $\linop{X}{Y}$ and write $\linopo{X} \coloneqq \linop{X}{X}$ if $Y = X$. For a linear operator $\calA \colon D(\calA) \subseteq X \rightarrow X$ we denote the range by $\range(\calA)$ and the graph norm by $\|\cdot \|_{\calA}$. Further, the resolvent set and the spectrum are denoted by $\rho(\calA)$ and $\sigma(\calA)$, respectively. The resolvent is written as $R(\lambda, \calA) \coloneqq (\lambda \Id - \calA)^{-1}$. In case $\calA$ generates a strongly continuous semigroup (short: $C_0$-semigroup) of operators $\{\S(t)\}_{t\ge 0}$, we say that the semigroup is of type $(M, \omega)$ if $\| \S(t) \|_X \le M \mexp^{\omega t}$. Convergence in the strong and norm topology for operators for a sequence of operators $\calK_n \in \linop{X}{Y}$ are denoted by 
\begin{equation*}
    \calK_n \narrowinftySOT{X}{Y} \calK \qquad \mathrm{and}\qquad  \calK_n \narrowinftyNT{X}{Y} \calK,
\end{equation*}
respectively. Further, given a normed space $X$, we denote the closure with respect to a norm $\|\cdot \|$ by $\overline{X}^{\|\cdot \|}$.
The $j$th unit vector in $\R^n$ is denoted by $\unitvector{j}$, the dimension $n\in \N$ is not stated explicitly as it will be clear from the context.
We use $\im$ to denote the imaginary unit, and for $z\in\C$, the argument is denoted by $\arg(z)$. With a slight misuse of notation, we use the symbol $\Gamma$ to denote both an abstract path and the Gamma function $\Gamma(z) = \int_0^\infty t^{z-1} \mathrm{e}^{-t} \dt$.

\section{Preliminaries and problem statements}
\label{sec:preliminaries}

In this section, we gather concepts and results from semigroup and control theory, which are needed throughout the manuscript. First, we recall essential facts about analytic semigroups in \Cref{subsec:sectorial-semigroups}. Then, in \Cref{subsec:control-systems}, we define the concrete control setting and revisit the precise definition of \ISS. Second, the approximation setting is summarized in \Cref{subsec:approximation-semigroups}. Given these prerequisites, we formulate the two central problem statements \Cref{problem:cs,problem:bcs} in \Cref{subsec:problem-statement}.

\subsection{Sectorial operators, analytic semigroups, and fractional powers}
\label{subsec:sectorial-semigroups}

Throughout the paper, we assume that the operator $-\Abase$ in \eqref{eq:cs_base} is sectorial of angle $0 < \sectorbound \le \tfrac{\pi}{2}$.

\begin{definition}[{\!\!\cite[Ch.\,1,\,§1]{Haa03}}]
    \label{def:sectorial-operator}
    A closed linear operator $-\calA$ is called \emph{sectorial} of angle $\sectorbound \in [0, \tfrac{\pi}{2})$ if 
    \begin{enumerate}
        \item the spectrum is contained in a sector of angle $\sectorbound$, i.e., 
        \begin{equation*}
        \sigma(-\Abase) \subseteq \othersector_{\sectorbound} \coloneqq \begin{cases} 
        	\lbrace z \in \C \mid |\mathrm{arg}\, z | < \sectorbound, \;z\neq 0\rbrace, & \text{if } \sectorbound > 0, \\
        (0, \infty), & \text{else},
        \end{cases}
        \end{equation*}
        \item and for all  $\sectorbound^\prime \in (\sectorbound, \pi)$ there exists $N > 0 $ such that $\|\lambda R(\lambda,-\calA)\| \le N$ in $\lambda \in \C\setminus  \overline{\othersector_{\sectorbound^\prime}}$.\label{item:resolvent-bound-1}
    \end{enumerate}
    The set of all sectorial operators of angle $\sectorbound$ is denoted by $\Sect(\sectorbound)$.
\end{definition} 

A graphical relation between the sector $\othersector_\sectorbound$ and the spectrum $\sigma(-\Abase)$ is depicted in \Cref{fig:sector-definition}.

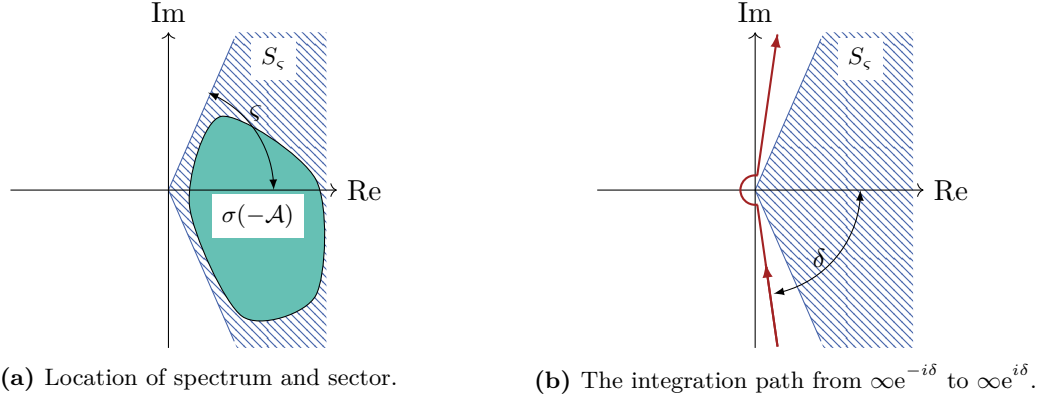
\begin{figure}[t]
    \centering
    \begin{subfigure}[t]{0.48\linewidth}
      \centering
        \begin{tikzpicture}[scale=0.70]
        \draw[scale=0.5, domain=0:2.5, smooth, variable=\x, kit-blue] plot ({\x}, {-2.3*\x});
        \draw[scale=0.5, domain=0:2.5, smooth, variable=\x, kit-blue] plot ({\x}, {2.3*\x});        
        \fill[pattern color=kit-blue, pattern=north west lines] (3,0)--(3, 3)--(1.3, 3) -- (0,0) --(1.3, -3) -- (3,-3);
        \filldraw[fill=kit-green60] plot[smooth cycle] coordinates {(2.8, .2) (1, 1.4) (0.4, -0.3) (1.4, -2.4) (2.8, -2)};
        \draw[latex-latex]  (68:2.0) arc(68:0:2.0) node[midway,above]{$\sectorbound$};
        \node[fill=white] at (2, 2.5) (S) {\footnotesize{$\othersector_{\sectorbound}$}};
        \node[fill=white] at (1.7, -0.5) (S) {\footnotesize{$\sigma(-\calA)$}};

        \draw[->] (-3, 0) -- (3.2, 0) node[right] {Re};
        \draw[->] (0, -3) -- (0, 3) node[above] {Im};
        
    \end{tikzpicture}
    \subcaption{Location of spectrum and sector.}
    \label{subfig:spectrum_sector}
  \end{subfigure}
  \hfill
  \begin{subfigure}[t]{0.48\linewidth}
    \centering
    \begin{tikzpicture}[scale=0.70]
        \draw[scale=0.5, domain=0:2.5, smooth, variable=\x, kit-blue] plot ({\x}, {-2.3*\x});
        \draw[scale=0.5, domain=0:2.5, smooth, variable=\x, kit-blue] plot ({\x}, {2.3*\x});        
        \fill[pattern color=kit-blue, pattern=north west lines] (3,0)--(3, 3)--(1.3, 3) -- (0,0) --(1.3, -3) -- (3,-3);
        \node[fill=white] at (2, 2.5) (S) {\footnotesize{$\othersector_{\sectorbound}$}};

        \draw[->] (-3, 0) -- (3.2, 0) node[right] {Re};
        \draw[->] (0, -3) -- (0, 3) node[above] {Im};
        
        \draw[scale=0.5, domain=-0.85:-0.4,thick,  smooth, variable=\x, kit-red, -{latex}] plot ({-\x}, {7*\x});
        \draw[scale=0.5, domain=-0.85:-0.08,thick,  smooth, variable=\x, kit-red] plot ({-\x}, {7*\x});
        \draw[scale=0.5, domain=0.85:0.08, thick, smooth, variable=\x, kit-red, latex-] plot ({\x}, {7*\x});
        \draw [thick, -, kit-red] (-0.28,0) arc [radius=-0.28,start angle=0,end angle=100];
        \draw [thick, -, kit-red] (-0.28,0) arc [radius=-0.28,start angle=0,end angle=-100];

        \draw[latex-latex]  (-80:2.0) arc(-80:0:2.0) node[midway, left]{\small{$\delta$}};
                                
    \end{tikzpicture}
    \subcaption{The integration path from $\infty \mexp^{-i\delta}$ to $\infty \mexp^{i\delta}$.}
    \label{subfig:intration_path_sector}
  \end{subfigure}
    \caption{Illustration of the spectrum $\sigma(-\calA)$ and the sector $\othersector_{\sectorbound}$ from the definition of a sectorial operator $-\calA$ and the integration path of the definition of fractional powers.}
    \label{fig:sector-definition}
\end{figure}

\begin{definition}[{\!\!\cite[Ch.\,1,\,§1]{Haa03}}]
    A family of operators $-\Anbase$ for $n\in \N$ is called \emph{uniformly sectorial of angle $\sectorbound \in [0, \tfrac{\pi}{2})$} if 
    \begin{enumerate}
        \item $-\Anbase \in \Sect(\sectorbound)$ for all $n\in\N$ , and
        \item for all $\sectorbound^\prime \in (\sectorbound, \pi)$ it holds that $\sup_{n\in\N} \sup_{\lambda \in \C \setminus \overline{\othersector_{\sectorbound^\prime}}} \|\lambda \res{\lambda}{-\Anbase}\|_{\linopo{X}} < \infty$.
    \end{enumerate}
\end{definition}

\begin{definition}[{\!\!\cite[Def.\,II.4.5]{EngN00}}]
    \label{def:analytic-semigroup}
    A semigroup of operators $\{\S(t)\}_{t\ge 0}$ is called \emph{analytic} of angle $\sectorbound \in (0, \tfrac{\pi}{2}]$ if for a sector $\othersector_{\sectorbound} \coloneqq \{ z\in \C \mid |\arg(z)| < \sectorbound,\; z\neq 0\}$ the following holds:
    \begin{enumerate}
        \item $\S(0) = \calI$ and $\S(z_1)\S(z_2) = \S(z_1+z_2)$ for $z_1, z_2 \in \othersector_{\sectorbound}$,
        \item the map $z \mapsto \S(z)$ is analytic in $\othersector_{\sectorbound }$, and 
        \item $\lim_{z \rightarrow 0, z\in \othersector_{\sectorbound -\epsilon}} \S(z) x = x$ for $\epsilon \in (0, \sectorbound)$ and all $x\in X$.
    \end{enumerate}
\end{definition}

Sectorial operators and analytic semigroups have many useful properties. Amongst them, the following three will prove especially advantageous in the subsequent discussion. First, the image of the semigroup  lies in the domain of the generator, i.e., $\range(\S(t)) \subseteq D(\Abase)$ for all $t > 0$ \cite[Ch. II, Thm. 4.6, part (c)]{EngN00}. Second, the product $\Abase \S(t)$ is bounded \cite[Ch. 2, proof of Thm. 5.2]{Paz89} by
\begin{equation}
    \label{eq:AS_bounded}
    \| \Abase \S(t)\|_{\linopo{X}} \le \frac{M}{\pi\cos(\theta)} \frac{1}{t}
\end{equation}
for $\theta \in (\tfrac{\pi}{2}, \sectorbound + \tfrac{\pi}{2})$, where $M$ denotes the first parameter from the growth bound of the semigroup $(M, \omega)$. Third, a particularly useful feature of sectorial operators is that fractional powers can be defined and computed using path integrals. A suitable example of a path $\Gamma$ is displayed in \Cref{subfig:intration_path_sector}.

\begin{definition}[{\!\!\cite[Def.\,II.5.25,\,II.5.31]{EngN00}}]
    \label{def:fractional-power}
    For $\alpha \in (0,1)$, the \emph{negative fractional power} of a sectorial operator $-\calA \in \Sect(\sectorbound)$ is defined as 
    \begin{equation*}
        (-\calA)^{-\alpha} \coloneqq \frac{1}{2 \pi i} \int_\path s^{-\alpha} \res{s}{-\calA} \ds
    \end{equation*}
    for a suitable path $\path$ which runs in $\C \setminus \overline{\othersector_\sectorbound}$ from $\infty \mexp^{-i\delta}$ to $\infty \mexp^{i\delta}$ for $\delta\in(\sectorbound, \pi)$.
    The positive fractional power is then defined as the inverse 
    \begin{equation*}
        (-\calA)^\alpha \coloneqq ((-\calA)^{-\alpha})^{-1}
    \end{equation*}
    on the domain $D((-\calA)^\alpha) = \range((-\calA)^{-\alpha})$.
\end{definition}

For $\alpha \in (0,1)$, the negative fractional power can also be represented as \cite[Ch.\,2, eq.\,(6.4)]{Paz89}
\begin{equation}
    \label{eq:fractional_power_line_integral}
    (-\calA)^{-\alpha} = \frac{\sin(\pi \alpha)}{\pi} \int_0^\infty t^{-\alpha}R(t, -\calA) \dt
\end{equation}
or by using the generated semigroup $\{\S(t)\}_{t\geq 0}$ \cite[Ch.\,2, eq.\,(6.9)]{Paz89} 
\begin{equation}
    \label{eq:fractional_power_integral_semigroup}
    (-\calA)^{-\alpha} = \frac{1}{\Gamma(\alpha)} \int_0^\infty t^{\alpha-1}\S(t)\dt.
\end{equation}
We remark that there are different representations of the fractional power in terms of the resolvent or the semigroup of operators;
a concise overview can be found in \cite{Haa06}.

\begin{definition}
    \label{def:fractional-power-spaces}
    For a sectorial operator $-\calA$ and $\alpha \in (-1, 1)$, we define the norm $\|\cdot \|_{\Xalpha} \coloneqq \| (-\calA)^{\alpha} \cdot \|_X$ and the  \emph{fractional power space} 
    \begin{equation*}
        \Xalpha \coloneqq (D((-\calA)^\alpha), \|\cdot \|_{\Xalpha}).
    \end{equation*}
\end{definition}

In the context of fractional power spaces, we use in \Cref{sec:boundary-control-results} the moment inequality and their identification with complex interpolation spaces (c.f.~\cite[Thm.~3]{See71}), which is on complex Banach spaces $X$ implied by $-\Abase$ having bounded imaginary powers. We briefly introduce all required concepts next and refer the reader for a concise introduction to interpolation theory to \cite[Ch.~1]{Lun12} and in \cite{Tri95}. 

\begin{lemma}[{\!\!\cite[Ch.\,II,~Thm.\,5.34]{EngN00}}]
    \label{lem:interpolation-fractional-power}
    Consider a sectorial operator $-\calA$. Then for $\alpha < \beta < \gamma$, there exists an $L > 0$ such that 
    \begin{equation*}
        \| (-\Abase)^\beta x \|_{X} \le L \| (-\Abase)^\alpha x\|_X^{\frac{\gamma- \beta}{\gamma-\alpha}}\| (-\Abase)^\gamma x\|_X^{\frac{\beta- \alpha}{\gamma-\alpha}}.
    \end{equation*}
\end{lemma}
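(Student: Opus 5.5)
The plan is to first reduce to the case $\alpha = 0$, and then prove a two-sided interpolation estimate for a single point $\beta\in(0,\gamma)$.

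\textbf{Reduction to $\alpha=0$.} We may assume $x \in D((-\Abase)^\gamma)$; otherwise the right-hand side is infinite and there is nothing to prove. Put $y \coloneqq (-\Abase)^\alpha x$. By the semigroup law for fractional powers, $(-\Abase)^{\beta}x = (-\Abase)^{\beta-\alpha} y$ and $(-\Abase)^{\gamma}x = (-\Abase)^{\gamma-\alpha} y$. Hence it suffices to show, for $0<\theta<\sigma$,
\begin{equation*}
  \|(-\Abase)^{\theta} y\|_X \le L \|y\|_X^{1-\theta/\sigma}\|(-\Abase)^{\sigma}y\|_X^{\theta/\sigma}.
\end{equation*}
Setting $\theta=\beta-\alpha$ and $\sigma=\gamma-\alpha$ recovers exactly the exponents $\frac{\gamma-\beta}{\gamma-\alpha}$ and $\frac{\beta-\alpha}{\gamma-\alpha}$ in the lemma.

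\textbf{Base case $0<\theta<\sigma\le 1$.} Here I would use a resolvent splitting. For $\theta\in(0,1)$ and $y\in D(-\Abase)$, Balakrishnan's formula gives
\begin{equation*}
  (-\Abase)^\theta y = \frac{\sin\pi\theta}{\pi}\int_0^\infty t^{\theta-1}(-\Abase)R(t,-\Abase)y\dt .
\end{equation*}
This follows from \eqref{eq:fractional_power_line_integral} applied to $(-\Abase)^{\theta-1}$ and then multiplied by $-\Abase$. Split the integral at a free parameter $\tau>0$.
\begin{itemize}
  \item On $(\tau,\infty)$, the sectorial bound gives $\|(-\Abase)R(t,-\Abase)\|\le 1+N$. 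This part is therefore bounded by $C\tau^{\theta-\sigma}\|(-\Abase)^\sigma y\|$ after writing $(-\Abase)R(t,-\Abase)y = (-\Abase)^{1-\sigma}R(t,-\Abase)\,(-\Abase)^{\sigma}y$. The key ingredient is the standard estimate $\|(-\Abase)^{1-\sigma}R(t,-\Abase)\|\le C t^{-\sigma}$, itself a consequence of the elementary moment inequality for exponents $(0,1-\sigma,1)$.
  \item On $(0,\tau)$, use $\|(-\Abase)R(t,-\Abase)y\|\le C\|y\|$ together with the same trick. This gives the bound $C\tau^{\theta}\|y\|$.
\end{itemize}
Both the $t^{-\sigma}$ resolvent bound and the convergence of the integral need sectoriality. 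If $0\in\rho(-\Abase)$ fails, I would replace $-\Abase$ by $-\Abase+\epsilon$ and pass to the limit. Optimising over $\tau$, by choosing $\tau^\sigma = \|(-\Abase)^\sigma y\|/\|y\|$, yields the claim.

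\textbf{General exponents.} If $\sigma>1$, write $\sigma = m+\sigma'$ with $m\in\N$ and $\sigma'\in(0,1]$, and iterate. Concretely, apply the base case to intermediate points $\theta_k$ on a chain $0<\theta_1<\dots<\sigma$ with consecutive gaps at most $1$. Then combine the resulting log-convexity inequalities: the function $f(s)=\log\|(-\Abase)^s y\|$ is, up to additive constants, midpoint/three-point convex on each step. A finite chaining argument of the type used for convex functions then transfers the three-point inequality to arbitrary $\theta<\sigma$, with a constant $L$ depending only on $\alpha,\beta,\gamma$ and the sectorial constants.

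\textbf{Main obstacle.} I expect the difficulty to lie in the last step. Turning "approximately log-convex on short intervals" into the global inequality with a controlled constant requires care with the additive constants $\log L$ accumulating along the chain. Since the chain has a fixed finite length, they remain bounded, so the argument closes.
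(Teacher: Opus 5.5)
The paper gives no proof of this lemma; it only cites Engel--Nagel. Your resolvent-based argument is therefore a genuinely different route from the standard semigroup proof. It is sound except for the chaining step, which does not close as written.

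\textbf{The chaining step.} Approximate log-convexity on each step of a chain with gaps at most $1$ is not enough. For instance, $f(s)=-K|s-1|$ is affine on $[0,1]$ and on $[1,2]$, yet $f(1)-\tfrac12(f(0)+f(2))=K$ is arbitrary. The three-point inequalities must straddle the nodes. A triple $(\theta_{k-1},\theta_k,\theta_{k+1})$ with gaps up to $1$ spans up to $2$, which lies outside your base case.

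A clean fix is as follows. Take $s_k=kh$ for $k=0,\dots,m$, with $h=\sigma/m\le\tfrac12$, and set $g_k=\log\|(-\mathcal A)^{s_k}y\|$, which is finite for $y\neq0$ by injectivity. Apply the base case to $(-\mathcal A)^{s_{k-1}}y$ with exponents $(h,2h)$. This gives $g_k\le c_0+\tfrac12(g_{k-1}+g_{k+1})$ with $c_0=\log L_0$, where w.l.o.g.\ $L_0\ge1$. Hence $k\mapsto g_k+c_0k^2$ is discretely convex, and
\[
  g_k\le\Bigl(1-\frac km\Bigr)g_0+\frac km\,g_m+c_0\,k(m-k).
\]
For $\theta\in(s_k,s_{k+1})$, use one more base-case inequality on $(s_k,\theta,s_{k+1})$, with constant $L_1$ and weights $\lambda=(s_{k+1}-\theta)/h$ and $1-\lambda$. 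Combining it with the bounds above yields the claim with $L=L_1L_0^{m^2/4}$. This is the controlled accumulation you anticipated, but it has to be shown.

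\textbf{Smaller points.}
\begin{itemize}
\item Prove the case $\sigma=1$ of the base case first. Your bound $\|(-\mathcal A)^{1-\sigma}(t-\mathcal A)^{-1}\|\le Ct^{-\sigma}$ is exactly that case.
\item Extend the base case from $y\in D(\mathcal A)$ to $y\in D((-\mathcal A)^\sigma)$ via $y_n=n(n-\mathcal A)^{-1}y$.
\item Read $R(t,-\mathcal A)$ as $(t-\mathcal A)^{-1}$ throughout. The paper's \eqref{eq:fractional_power_line_integral} carries the same sign slip.
\item Drop the $\epsilon$-shift. The paper's definition of sectoriality already gives $0\in\rho(-\mathcal A)$, which the negative exponents in the statement need anyway.
\end{itemize}

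\textbf{Comparison with the semigroup proof.} The standard semigroup proof handles all $0<\theta<\sigma$ in one stroke. One writes
\[
  (-\mathcal A)^\theta y=\frac{1}{\Gamma(\sigma-\theta)}\int_0^\infty t^{\sigma-\theta-1}\mathcal S(t)(-\mathcal A)^\sigma y\,\mathrm dt,
\]
then bounds the integrand by $M\|(-\mathcal A)^\sigma y\|$ on $(0,\tau)$ and by $C_\sigma t^{-\sigma}\|y\|$ on $(\tau,\infty)$, and optimises in $\tau$. No chaining is needed. However, it uses the bounded analytic semigroup and the smoothing estimate $\|(-\mathcal A)^\sigma\mathcal S(t)\|\le C_\sigma t^{-\sigma}$.

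Your route only uses $\sup_{t>0}\|t(t-\mathcal A)^{-1}\|<\infty$, so it works for sectorial operators of any angle below $\pi$. The price is the chaining. You could also avoid the chaining while staying with resolvents: split the higher-order formula
\[
  (-\mathcal A)^\theta y=\frac{\Gamma(m)}{\Gamma(\theta)\Gamma(m-\theta)}\int_0^\infty t^{\theta-1}\bigl[(-\mathcal A)(t-\mathcal A)^{-1}\bigr]^m y\,\mathrm dt
\]
with an integer $m\ge\sigma$, in the same way as your base case.
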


\begin{definition}[{\!\!\cite[Def.\,1.1.1]{Lun12}}]
    Given Banach spaces $X, D, Y$ such that $D \subseteq Y \subseteq X$ and $\alpha \in (0,1)$, then $Y$ is called an \emph{interpolation space} of order $\alpha$ if there is a constant $c$ such that
    \begin{equation}\label{eq:interpolation_property}
        \| x\|_Y \le c \|x\|_X^{1-\alpha} \|x \|_D^{\alpha} \quad \mathrm{for\;all}\; x\in D.
    \end{equation}
\end{definition}

In particular, we denote the complex interpolation spaces between $X$ and $D$ by $[X,D]_{\alpha}$, for the precise definition of this class of interpolation spaces, we refer the reader to \cite[Ch.\,11]{MarS01}.

\begin{definition}[{\!\!\cite[Cor.\,2.32]{Haa03}}]
    \label{def:bip}
    An injective, sectorial operator $-\calA \in \Sect(\sectorbound)$ is said to have \emph{bounded imaginary powers}, denoted by $-\calA \in \BIP(X)$, if 
    \begin{enumerate}
        \item $\overline{D(-\calA) \cap \range(-\calA)} = X$, and 
        \item $(-\calA)^{\im s} \in \linopo{X}$ for all $s \in \R$.
    \end{enumerate}
\end{definition}

We infer from \cite[Thm.~11.6.1]{MarS01} that $-\calA \in \BIP(X)$ implies that the fractional power spaces $\Xalpha$ coincide with the complex interpolation spaces $[X, D(-\calA)]_\alpha$ for $\alpha \in (0,1)$ and similarily for negative fractional powers. 

\subsection{Control systems and input-to-state stability}
\label{subsec:control-systems}

Before heading to the concrete definition of the control systems with unbounded control operators, we recall the definition of the extrapolated space $\Xmin$ and the extrapolated semigroup $\{\Smin(t)\}_{t\ge 0}$. This provides a precise framework for handling unbounded operators. 

\begin{definition}[{\!\!\cite[Def.\,II.5.4, Thm.\,II.5.5]{EngN00}}]
    \label{def:extrapolation-space}
    For a given Banach space $X$ and a semigroup of operators $\{\S(t)\}_{t\ge 0}$, which is generated by $\Abase \colon D(\Abase) \subseteq X \mapsto X$  with $0 \in \rho(\Abase)$, we define the norm 
    \begin{equation*}
        \|x \|_{\Xmin} \coloneqq \| \Abase^{-1} x\|_{X}. 
    \end{equation*}
    The completion of X with respect to $\|\cdot \|_{\Xmin}$ is denoted by $\Xmin$ and called an \emph{extrapolation space}. The continuous extensions of $\{\S(t)\}_{t\ge 0}$ and $\Abase$ to $\Xmin$ are denoted by $\{\Smin(t)\}_{t\ge 0}$ and $\Amin$ respectively. 
\end{definition}

\begin{definition}[{\!\!\cite[Def.\,2.1, Ex.\,2.4]{Sch20}}]
    \label{def:control_system}
    Consider a Banach space $X$, a normed space $U$, a closed, densely defined, linear operator $\Abase \colon D(\Abase) \subseteq X \rightarrow X$ that generates a $C_0$-semigroup $\{\S(t)\}_{t\ge 0}$ on~$X$, and an operator $\Bshort \in \linop{U}{\Xmin}$. The linear time-invariant system
    \begin{equation}
        \label{eq:cs-base-2}
        \statedt(t) = \Abase \state(t) + \Bshort \inp(t)
    \end{equation}
    is called a \emph{control system}, denoted as $\cs(\Abase, \Bshort; X, U)$, with state $\state(t) \in X$ and input $\inp(t) \in \InpSpace$. If~$\Abase$ is analytic, then we call $\cs(\Abase, \Bshort; X, U)$ an \emph{analytic control system}.
\end{definition}

Since $\Bshort \in \linop{U}{\Xmin}$, it is not guaranteed that 
\begin{equation}
    \label{eq:mild-solution}
    \flowx{t}{\stateinit}{\inp} = \S(t) x_0 + \int_0^t \Smin(t-s) \Bshort \inp(s) \ds
\end{equation} 
is an element of $X$, but rather $\flowx{t}{\stateinit}{\inp}\in\Xmin$. However, if \eqref{eq:mild-solution} lies in $X$, it is called the \emph{mild solution} of \eqref{eq:cs-base-2} \cite[Def.\,4.7]{MirP20}. The notion of \emph{admissibility} formalizes the existence of the mild solution for a set of allowed inputs.

\begin{definition}[{\!\!\cite{Wei89}}]
    \label{def:admissibility}
    The control system $\Sigma(\Abase, \Bshort; X, U)$ is called \emph{$\Linfty$-admissible}, if
    \begin{equation}
        \label{eq:admissibility}
        \int_0^t \Smin(t-s) \Bshort \inp(s) \ds \in X
    \end{equation}
    for all $t> 0$ and $\inp \in \ZU$.
\end{definition}

We now turn our attention to boundary control systems.

\begin{definition}[{\!\!\!\cite[Def.\,2.5]{Sch20}}]
    \label{def:boundary_control_ssystem}
    Given a Banach space $X$, a normed space $U$, a linear operator $\Ainit \colon D(\Ainit)\subseteq X \rightarrow X$, and a boundary operator $\bop \in \linop{D(\Ainit)}{U}$. Assume that $\Abase \coloneqq \Ainit|_{\ker\bop}$ is the generator of a $C_0$-semigroup $\{\S(t)\}_{t\ge 0}$ on $X$ and that $\bop$ has a linear right-inverse denoted by $\boprinv$, which is bounded from $U$ to $(D(\Ainit), \|\cdot\|_\Ainit)$. Then the system
    \begin{equation}
        \label{eq:bcs_base}
        \left\{\quad\begin{aligned}
            \statedt(t, \cdot ) &= \Ainit \state(t, \cdot) \quad& \mathrm{in}\; \timeInt \times  \Omega, \\
            \bop \state(t, \cdot) &= \inp(t) \quad& \mathrm{in}\; \timeInt \times \partial \Omega, \\
            \state(0, \cdot) & = \stateinit & \mathrm{in}\; \Omega,
        \end{aligned}\right.
    \end{equation}
    is called a \emph{boundary control system} and denoted with $\bcs(\Ainit, \bop; X, \InpSpace)$.
\end{definition}

As previously mentioned, control systems \eqref{eq:cs-base-2} with unbounded control operators $\Bshort$ naturally arise when transforming boundary control systems, which is referred to as the Fattorini trick~\cite{Fat68,Sch20}.

\begin{lemma}[{\!\!\cite{Fat68}, \cite[p.\,93]{Sch20}}]
    \label{lem:fattorini-trick}
    A boundary control system $\bcs(\Ainit, \bop; X, \InpSpace)$ can be transformed into an equivalent control system $\cs(\Ainit\mid_{\ker \bop}, \Bshort; X, \InpSpace)$ with 
    \begin{equation*}
        \Bshort \coloneqq \Ainit \boprinv - \Amin \boprinv \in \linop{\InpSpace}{\Xmin}.
    \end{equation*}
\end{lemma}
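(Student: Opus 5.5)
The plan is to verify the two parts of the claim separately: that $\Bshort$ is a bounded operator from $\InpSpace$ into $\Xmin$, and that the mild solutions of $\cs(\Abase,\Bshort;X,\InpSpace)$ coincide with the (classical, resp. generalized) solutions of $\bcs(\Ainit,\bop;X,\InpSpace)$. Throughout, we use that $\Abase=\Ainit|_{\ker\bop}$ generates a $C_0$-semigroup. After shifting $\Ainit$ by a scalar if necessary, we may assume $0\in\rho(\Abase)$, so that $\Xmin$ and $\Amin$ from \Cref{def:extrapolation-space} are available.

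\textbf{Boundedness of $\Bshort$.} For $u\in\InpSpace$, the element $\boprinv u$ lies in $D(\Ainit)\subseteq X$. Hence $\Ainit\boprinv u\in X\hookrightarrow\Xmin$ and $\Amin\boprinv u\in\Xmin$, so $\Bshort u$ is a well-defined element of $\Xmin$. For the first term,
\[
\|\Ainit\boprinv u\|_{\Xmin}\le\|\Abase^{-1}\|\,\|\Ainit\boprinv u\|_X\le C\|u\|_{\InpSpace},
\]
because $\boprinv$ is bounded into $(D(\Ainit),\|\cdot\|_\Ainit)$. For the second term, $\|\Amin\boprinv u\|_{\Xmin}=\|\boprinv u\|_X\le C\|u\|_{\InpSpace}$, using that $\Amin$ is an isometry from $X$ onto $\Xmin$. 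Together these give $\Bshort\in\linop{\InpSpace}{\Xmin}$.

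\textbf{Equivalence of solutions.} Take a smooth input, say $u\in C^2([0,t];\InpSpace)$, and a classical solution $x$ of \eqref{eq:bcs_base}. Set $v\coloneqq x-\boprinv u$. Then $\bop v=0$, so $v(t)\in\ker\bop$, and $v$ satisfies
\[
\dot v=\Ainit x-\boprinv\dot u=\Abase v+\Ainit\boprinv u-\boprinv\dot u .
\]
This is an inhomogeneous Cauchy problem with the $X$-valued forcing $\Ainit\boprinv u-\boprinv\dot u$, which we solve by the variation of constants formula. We then integrate by parts inside $\Xmin$ via
\[
-\int_0^t\Smin(t-s)\boprinv\dot u(s)\ds=-\boprinv u(t)+\Smin(t)\boprinv u(0)-\int_0^t\Smin(t-s)\Amin\boprinv u(s)\ds,
\]
which uses $\tfrac{\diff}{\diff s}\Smin(t-s)y=-\Amin\Smin(t-s)y$ on $X\subseteq D(\Amin)$. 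Adding $\boprinv u$ back to $v$ yields exactly \eqref{eq:mild-solution} with $\Bshort=\Ainit\boprinv-\Amin\boprinv$. Conversely, for smooth $u$ and compatible $x_0$, this computation read backwards shows that the mild solution of $\cs$ is a classical solution of $\bcs$. For general inputs, both solution concepts are defined by density or continuous extension, and both sides depend continuously on $(x_0,u)$ in $\Xmin$, so the identity extends.

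\textbf{Main obstacle.} The delicate step is the integration by parts in $\Xmin$. It needs $s\mapsto\Smin(t-s)\boprinv u(s)$ to be differentiable in $\Xmin$, and it needs the identification of $\Smin$ restricted to $X$ with $\S$. Both follow from \Cref{def:extrapolation-space}, since $\Smin$ extends $\S$ and $X=D(\Amin)$ with equivalent norms. The remaining work is bookkeeping.
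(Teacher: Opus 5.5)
Your proof is correct. It is the classical Fattorini lifting argument: set $v = x - \boprinv\inp$, solve the resulting Cauchy problem with $X$-valued forcing $\Ainit\boprinv\inp - \boprinv\inpdt$ by variation of constants, and integrate by parts in $\Xmin$; the paper gives no proof of its own and only cites Fattorini and Schwenninger for this lemma. One point to phrase more carefully is the extension to rough inputs: it must use density of smooth inputs in $L^1_{\mathrm{loc}}(0,\infty;\InpSpace)$ together with $L^1$-continuity of the input map into $\Xmin$, since smooth functions are not dense in $\Linfty$ (though for non-smooth inputs the paper in any case defines the solution of $\bcs$ through $\cs$).
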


Finally, we recall the definition of \ISS for (boundary) control systems. This definition relies on the standard classes of comparison functions $\calK, \calL$, and $\calK\calL$, c.f. \cite{Kel14}, given as
\begin{equation*}
    \begin{aligned}
        \calK &\coloneqq  \left\{ \mu: \R_+ \rightarrow \R_{+} \mid \mu(0)=0, \mu\;\mathrm{cont.}, \mathrm{strictly\;increasing}\right\}, \\
        \calL &\coloneqq  \left\{  \mu: \R_{+}\rightarrow \R_{+} \mid\mu\;\mathrm{cont.}, \mathrm{strictly\;decreasing}, \lim_{t \rightarrow \infty} \mu(t)=0\right\},\,\mathrm{and} \\
        \calK\calL &\coloneqq  \left\{ \mu \colon \R_{+}^2 \rightarrow \R_{+} \mid \mu(\cdot, t) \in \calK \text{ for all } t\ge 0; \;  \mu(s, \cdot )\in \calL \text{ for all } s \ge 0\right\}. \\
    \end{aligned}
\end{equation*}

\begin{definition}[{\!\!\cite{Son08}}]
    \label{def:iss}
    An $\Linfty$-admissible control system $\cs( \Abase, \Bshort; X, U)$ is called \emph{$\Linfty$-\ISS} if there are functions $\beta \in \calK \calL, \; \gamma \in \calK$ such that the mild solution \eqref{eq:mild-solution} satisfies
    \begin{equation}
        \label{eq:iss-definition}
        \| \flowx{t}{\stateinit}{\inp}  \|_X \le \beta(\|\stateinit\|_X, t) + \gamma(\|\inp\|_{\ZU})
    \end{equation}
    for all $t\in (0, \infty)$ and $\inp\in \ZU$. Similarly, a boundary control system $\bcs(\Ainit, \bop; X, \InpSpace)$ is called~$\Linfty$-\ISS if the equivalent control system $\cs(\Ainit\vert_{\ker \bop}, \Bshort; X, \InpSpace)$ (in the sense of \Cref{lem:fattorini-trick}) is $\Linfty$-\ISS. The functions $\beta$ and $\gamma$ are called \emph{\ISS gains}.
\end{definition}

\subsection{Approximation of strongly continuous semigroups of operators}
\label{subsec:approximation-semigroups}

For the approximations \eqref{eq:cs_approx}, we adopt the framework from \cite{Nam23, Kur70} with the following definitions and assumption. 

\begin{definition}
    \label{def:convergence-banach-spaces}
    Given a Banach space $X$, consider a sequence of triples $(\Xn, \Pn, \En)_{n\in \N}$, each consisting of a Banach space $\Xn$, a bounded linear operator $\Pn \colon X \rightarrow \Xn$, and a linear right-inverse to $\Pn$ denoted by $\En \colon \Xn \rightarrow X$. If further 
    \begin{equation*}
        \lim_{n\rightarrow \infty} \| \Pn f\|_{\Xn} = \| f\|_X.
    \end{equation*}
    for all $f\in X$, then we call $(\Xn, \Pn, \En)_{n\in \N}$ an \emph{approximation sequence} for $X$. 
\end{definition}

For brevity, we write in the following only $(\Xn)_{n\in \N}$ if the restriction and extension operators $\Pn$ and $\En$ are not explicitely needed for the understanding. 

\begin{assumption}[{\!\!\cite[(A1)-(A3)]{ItoK98}}]
    \label{ass:approximating-banach-spaces}
    Let a Banach space $X$ and an approximation sequence $(\Xn, \Pn, \En)_{n\in \N}$ for $X$ be given. The operators $\Pn$ and $\En$ satisfy
    \begin{enumerate}[label=(TK\arabic*)]
        \item $\|\Pn\|_{\linop{X}{\Xn}}\le \mup$\,and $\|\En \|_{\linop{\Xn}{X}}\le \mue$\,for all $n\in\N$ with $\mup, \mue$\,independent of $n$,
        \item $\lim_{n\rightarrow \infty}\|\En \Pn \state - \state \|_X= 0$ for all $\state \in X$. \label{ass:EnPn-to-Id}
    \end{enumerate}
\end{assumption}

\begin{definition} 
    \label{def:convergence-semigroups}
    Consider a Banach space $X$, a semigroup of operators $\{\S(t)\}_{t\ge 0}$ on~$X$, an approximation sequence $(\Xn)_{n \in \N}$ for $X$, and for each $n\in \N$ a semigroup of operators $\{\Sn(t)\}_{t\ge 0}\in \linopo{\Xn}$. The sequence of semigroups $(\{\Sn(t)\}_{t\ge 0})_{n\in\N}$ is called \emph{convergent} to $\{\S(t)\}_{t\ge 0}$ if for all $\stateDn \in \Xn$ and $x\in X$ with $\lim_{n \rightarrow \infty}\En\stateDn = x$ 
    it holds that
    \begin{equation*}
        \lim_{n\rightarrow\infty} \En\Sn(t) \stateDn = \S(t) x \quad \text{for all}\;\; t\ge0.
    \end{equation*}
\end{definition}
This implies in particular that  
\begin{equation*}
    \En \Sn (t) \Pn \narrowinftySOT{X}{X} S(t) \quad \text{for all}\;\; t\ge0,
\end{equation*}
which we abbreviate as $\Sn(t) \narrowinftySOT{X}{X} \S(t)$.

\subsection{Problem statements}
\label{subsec:problem-statement}

The analysis of \ISS for abstract control systems and boundary control systems via finite-dimensional approximations leads to two closely related problems. In the first step, we study the abstract control system~\eqref{eq:cs-base-2} and its discretizations, with the goal of deriving explicit, computable \ISS gain functions. The precise problem formulation, including all relevant assumptions, is detailed in \Cref{problem:cs}. In the second step, addressed in \Cref{problem:bcs}, we extend the problem formulation to boundary control system~\eqref{eq:bcs_base} by exploiting the fact that the boundary operator~$\Bshort$ is realized via the Fattorini trick presented in \Cref{lem:fattorini-trick}.

\begin{problem} 
    \label{problem:cs}
    Consider an $\Linfty$-admissible control system $\cs(\Abase, \Bshort; X, U)$, a finite-dimensional approximation sequence $(\Xn)_{n\in\N}$, and control systems $\cs(\Anbase, \Bnshort; \Xn, U)$. Assuming that
    \begin{enumerate}[label=(A\arabic*)]
        \item \label{ass:cs:approximating-banach-spaces} the sequence of Banach spaces $(\Xn)_{n\in\N}$ fulfills \Cref{ass:approximating-banach-spaces}, 
        \item the operators $-\Abase$ and $-\Anbase$ for $n \in \N$ are uniformly sectorial of angle $\sectorbound \in [0, \tfrac{\pi}{2})$,\label{ass:uniform-sectoriality}
        \item the sequence of semigroups $\{\Sn(t)\}_{t\ge 0}$ generated by $\Anbase$ converges against $\{\S(t)\}_{t\ge 0}$ generated by $\Abase$, \label{ass::convergence-semigroups}
        \item there exists a path $\hat{\path}$ in $\C\setminus\overline{\othersector_{\sectorbound}}$ from $\infty\mexp^{-i\delta}$ to $\infty \mexp^{i\delta}$ for $\delta\in (\sectorbound,\pi)$ such that \label{ass:norm-convergence-resolvent-path} 
        \begin{equation*}
            \sup_{\lambda \in \hat{\path}}\| \En \res{\lambda}{-\Anbase} \Pn - \res{\lambda}{-\Abase}\|_X \narrowinfty 0 ,
        \end{equation*}
        \item and there exists an $\alpha \in (0,1)$ such that the control operators $\Bnshort$ approximate $\Bshort$ in the sense that $\En \Bnshort \narrowinftySOT{U}{\Xminalpha} \Bshort$, \label{ass:convergence-control-operators}
    \end{enumerate} 
    find \ISS gains for $\cs(\Abase, \Bshort; X, \InpSpace)$ from the sequence $\cs(\Anbase, \Bnshort; \Xn, \InpSpace)$.
\end{problem}

The following remarks are in order:
\begin{enumerate}
	\item The uniform sectoriality in \ref{ass:uniform-sectoriality} implies that for a fixed  $\sectorbound^\prime \in (\sectorbound, \pi)$ there exists a $D_n$ such that $\|\res{\lambda}{-\Anbase}\|_{\linopo{\Xn}} \le \tfrac{D_n}{|\lambda|+1}$ for all $\lambda \in \C\setminus\overline{\othersector_{\sectorbound^\prime}}$ and all $n\in \N$. Further, it also follows that $\sup_{n\in \N} D_n < \infty$.
	\item The Trotter--Kato theorem \cite{ItoK98} together with \ref{ass::convergence-semigroups} implies the pointwise convergence of the resolvents.
	\item Since $\Xn$ is finite-dimensional, $\Bnshort$ and $\En \Bnshort$ are bounded operators. Hence, $\Linfty$-admissibility of $\cs(\Abase, \En\Bnshort; X, \InpSpace)$ and $\cs(\Abase, \Bshort - \En\Bnshort; X, \InpSpace)$ is a consequence of the $\Linfty$-admissibility of $\cs(\Abase, \Bshort; X, \InpSpace)$.  
\end{enumerate}

To extend the problem setting to the case of boundary control systems, we replace \ref{ass:convergence-control-operators} by four other assumptions formulated in terms of the boundary control system.

\begin{problem}
    \label{problem:bcs}
    Given a complex Banach space X and a boundary control system $\bcs(\Ainit, \bop; X, U)$ for which
    \begin{enumerate}[label=(B\arabic*)]
        \item $\Abase \coloneqq \Ainit |_{\ker{\bop}}$ generates an analytic semigroup $\{\S(t)\}_{t\ge 0}$, \label{ass:bcs:generator}
        \item $\bop$ has a bounded, linear right inverse $\boprinv \in \linop{U}{D(\Ainit)}$, i.e., $\bop \boprinv = \IdU$, and there exists $\alpha\in(0,1)$ such that $\bop$ is bounded from $U$ to $\Xalpha$,\label{ass:bcs:bop-right-invertible}
        \item and the sectorial operator has bounded imaginary powers, i.e., $-\Abase \in \BIP(X)$,\label{ass:bcs:bip} 
    \end{enumerate}
    assume further, that a finite-dimensional approximation sequence $(\Xn)_{n \in \N}$, and control systems $\cs(\Anbase, \Bnshort; \Xn, U)$  fulfill \ref{ass:cs:approximating-banach-spaces}--\ref{ass:norm-convergence-resolvent-path} and
    \begin{enumerate}[label=(B\arabic*)]
      \addtocounter{enumi}{3}
      \item the approximations are consistent in the sense that $\En \Anbase \Pn$ is bounded uniformly in $n\in\N$ as both, operator in $\linop{D(\Abase)}{X}$ and $\linop{X}{\Xmin}$, \label{ass:consistent_approx}
    \end{enumerate}
    determine the \ISS gain $\gamma$ for $\bcs(\Ainit, \bop; X, U)$ such that neither an explicit construction of the extrapolated operator $\Amin$ nor access to the operator $\Bshort$ is required.
\end{problem}

\begin{remark}
  We emphasize that~\ref{ass:bcs:bip} can be replaced by any assumption that allows for the identification of the fractional power spaces with real or complex interpolation spaces. 
\end{remark}

To address \Cref{problem:cs,problem:bcs}, we first observe 
\begin{equation*}
    \begin{aligned}
        \| \flowx{t}{\stateinit}{\inp}  \|_X &\le \| \flowx{t}{\stateinit}{0} \|_X + \left\| \flowx{t}{0}{u}\right\|_X = \|\S(t) \stateinit\|_X + \left\| \int_0^t \Smin(t-s) \Bshort \inp(s) \ds \right\|_X.
    \end{aligned}
\end{equation*}
Hence, if $\beta \in \calK \calL$ and $\gamma\in \calK$ satisfy
\begin{align*}
    \beta(\|\stateinit\|, t) \ge \|\S(t) \stateinit\|_X, \quad \gamma(\|\inp\|_{\ZU}) \ge \left\| \int_0^t \Smin(t-s) \Bshort \inp(s) \ds \right\|_X
\end{align*}
they also satisfy \eqref{eq:iss-definition}. 
If the semigroup is of type $(M,\omega)$, then it is  clear that a valid choice of $\beta$ is 
\begin{equation*}
    \beta(s, t) \coloneqq M \mexp^{\omega t} s
\end{equation*}
for which the following approximation result for $\beta$ was shown in \cite{HilU25b}.

\begin{lemma}[{\!\!\cite[Thm.\,3.4]{HilU25b}}]
    \label{lem:omega_conv}
    Consider a Banach space $X$ and a sequence of Banach spaces $(\Xn)_{n \in \N}$ of $X$ which fulfills \Cref{ass:approximating-banach-spaces}, a semigroup of operators $\{\S(t)\}_{t \ge 0}$ on $X$, and a sequence of semigroups of operators $(\{\Sn(t)\}_{t\geq 0})_{n \in \N}$ on $\Xn$ of type $(M_n, \omega_n)$ respectively. Further, assume that
    \begin{itemize}
        \item[(i)] $\Sn(t) \narrowinftySOT{X}{X} \S(t)$ (in the sense of \Cref{def:convergence-semigroups}),
        \item[(ii)] $(M_n)_{n\in \N}$ and $(\omega_{n})_{n\in \N}$ are Cauchy sequences.
    \end{itemize}
    Then, a growth bound of the limit semigroup is given by $\|\S(t)\|_{\linopo{X}} \le M^\ast \mexp^{\omega^\ast t}$ with
    \begin{equation}
        \label{eq:growth-bound-limits}
        \omega^\ast \coloneqq \lim_{n\rightarrow\infty} \omega_{n} , \qquad
        M^\ast \coloneqq  \mup \mue \lim_{n \rightarrow \infty} M_n.
    \end{equation}
\end{lemma}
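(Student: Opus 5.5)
We will prove \Cref{lem:omega_conv} by passing to the limit in the growth bounds of the approximating semigroups, using the strong convergence from (i) together with the uniform bounds on $\Pn$ and $\En$ from \Cref{ass:approximating-banach-spaces}. Since $(M_n)$ and $(\omega_n)$ are Cauchy sequences of real numbers, the limits $M_\infty \coloneqq \lim_n M_n$ and $\omega^\ast = \lim_n \omega_n$ exist. This makes the right-hand side of \eqref{eq:growth-bound-limits} well defined.

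Fix $t \ge 0$ and $x \in X$. By (i) in the sense of \Cref{def:convergence-semigroups}, applied with $\stateDn \coloneqq \Pn x$, we have
\[
\S(t) x = \lim_{n\to\infty} \En \Sn(t) \Pn x .
\]
The admissible input sequence is justified by \ref{ass:EnPn-to-Id}, which gives $\En\Pn x \to x$, and this is exactly the hypothesis needed in \Cref{def:convergence-semigroups}. Continuity of the norm then yields
\[
\|\S(t)x\|_X = \lim_{n\to\infty} \|\En \Sn(t)\Pn x\|_X .
\]

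For each $n$ we estimate
\[
\|\En \Sn(t)\Pn x\|_X \le \|\En\|\,\|\Sn(t)\|_{\linopo{\Xn}}\,\|\Pn\|\,\|x\|_X \le \mue\, M_n \mexp^{\omega_n t}\,\mup\, \|x\|_X ,
\]
using (TK1) for the bounds on $\En$ and $\Pn$ and the type $(M_n,\omega_n)$ of $\Sn$. For fixed $t$, the right-hand side converges to $\mup\mue M_\infty \mexp^{\omega^\ast t}\|x\|_X$ as $n\to\infty$, because $s\mapsto \mexp^{st}$ is continuous. Passing to the limit in the inequality gives
\[
\|\S(t)x\|_X \le M^\ast \mexp^{\omega^\ast t}\|x\|_X .
\]
Taking the supremum over $\|x\|_X\le 1$ yields the claim.

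The argument is essentially routine. The only point requiring care is applying \Cref{def:convergence-semigroups}, which is phrased for sequences $\stateDn$ with $\En\stateDn\to x$. This is handled by choosing $\stateDn=\Pn x$ and invoking \ref{ass:EnPn-to-Id}.

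The bound is pointwise in $t$, so no uniformity of the convergence in $t$ is needed. We also do not need $\lim_n\|\Pn f\|=\|f\|$ from \Cref{def:convergence-banach-spaces}: the cruder constant $\mup\mue$ suffices and matches the stated $M^\ast$.
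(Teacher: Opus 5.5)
Your proof is correct and is the standard argument behind the cited \cite[Thm.\,3.4]{HilU25b}, which the paper states without reproving. You take $x_n = \Pn x$, which is admissible by (TK2), bound $\|\En\Sn(t)\Pn x\|_X \le \mue\mup M_n \mexp^{\omega_n t}\|x\|_X$ via (TK1), and pass to the limit for each fixed $t$; this reproduces exactly the stated constant $M^\ast = \mup\mue\lim_{n} M_n$, and, as you observe, using $\lim_n\|\Pn x\|_{\Xn} = \|x\|_X$ would even let you drop the factor $\mup$.
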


Building upon these results, we can now restrict the discussion to finding the \ISS gain $\gamma$ for settings as described in \Cref{problem:cs,problem:bcs}

\section{Approximation of the \ISS gain $\gamma$ for control systems}
\label{sec:distributed-control-results}

To solve \Cref{problem:cs}, it remains to find the \ISS gain $\gamma$ from the finite-dimensional control systems $\cs(\Anbase, \Bnshort; \Xn, U)$. Because $\S(t)$ is analytic and \ref{ass:convergence-control-operators} is assumed, we can estimate according to \cite[Prop.\,2.13]{Sch20}
\begin{align*}
	\left\| \int_0^t \Smin(t-s) \Bshort \inp(s) \ds \right\|_X &= \left\| \int_0^t \Smin (t-s)(-\Abase)^{1-\alpha} (-\Abase)^{-1+\alpha} \Bshort u(s)\ds \right\| _{X}\\
	&\le \int_0^t \left\| (-\Abase)^{1-\alpha} \S (t-s)\right\|_{\linopo{X}} \left\| (-\Abase)^{-1+\alpha} \Bshort u(s)\right\|_{X}\ds.
\end{align*}
Hence, we can derive a computable bound from the approximations by treating the two factors in the integrand separately. To obtain a bound for the first factor, we proceed by 
\begin{enumerate}
    \item finding a bound on $\| (-\Abase)^{1-\alpha}\Smin (t-s)\|_{\linopo{X}}$ which depends on the type~$(M, -\omega)$ of $\S(t)$ and~$D$ such that $\|R(\lambda, -\Abase)\| \le D (|\lambda|+1)^{-1}$ for $\lambda \in \C \setminus \overline{\othersector_{\sectorbound^\prime}}$ and $\sectorbound^\prime \in (\sectorbound, \pi)$ (see \Cref{lem:growth_bound_frac_power}),
    \item estimate $D$ from approximations (see \Cref{lem:estimate_D}), and to
    \item estimate $(M, -\omega)$ from approximations (see \Cref{lem:omega_conv}).   
\end{enumerate}
To estimate the second factor, we 
\begin{enumerate}
    \item apply the definition of the operator norm and bound 
    \begin{equation*}\left\| (-\Abase)^{-1+\alpha} \Bshort u(s)\right\|_{X} \le \left\| (-\Abase)^{-1+\alpha} \Bshort \right\|_{\linop{U}{X}}\left\| u(s)\right\|_{U},
    \end{equation*}
    \item show convergence of the negative fractional powers $(-\Anbase)^{-1+\alpha}$ to $(-\Abase)^{-1+\alpha}$ (see \Cref{lem:frac_power_neg}),
    \item and bound $\|(-\Abase)^{-1+\alpha}\Bshort\|_{\linop{U}{X}}$ by $\lim_{n\rightarrow\infty}\|(-\Anbase)^{-1+\alpha}\En\Bnshort\|_{\linop{U}{\Xn}}$ (see \Cref{lem:EnBn_in_Xmin}).
\end{enumerate}
Combining these results in the forthcoming \Cref{thm:dist_contr}, yields an \ISS gain $\gamma\in\calK$, which is computable from finite-dimensional approximations.

\begin{lemma} 
    \label{lem:growth_bound_frac_power}
    Let $\Abase$ be the generator of an analytic semigroup $\{\S(t)\}_{t\ge 0}$ on $X$ of type $(M, -\omega)$ and angle $\sectorbound$. Let $\sectorbound^\prime \in (\sectorbound, \pi)$ and $D > 0$ such that
    \begin{equation*}
        \|\res{\lambda}{-\Abase} \|_X \le \frac{D}{|\lambda| + 1} \; \text{for} \, \lambda \in \C \setminus \overline{\othersector_{\sectorbound^\prime}}
    \end{equation*}
    be given. For $\alpha \in (0, 1)$ and $\theta \in (\tfrac{\pi}{2},\tfrac{\pi}{2}+ \sectorbound)$ define
    \begin{align*}
        K_1(\omega,M,\alpha) &\coloneqq \frac{ \omega M }{\Gamma(1-\alpha)} \int_0^\infty s^{-\alpha} \mexp^{-\omega s}\ds,\,\mathrm{and}\\
        K_2(\omega,M,\alpha,D,\theta) &\coloneqq \frac{D }{\Gamma(1-\alpha)\pi |\cos(\theta)|} \int_0^\infty s^{-\alpha}  (1+s)^{-1} \ds.
    \end{align*}
    Then,
    \begin{equation*}
        \| (-\Abase)^\alpha \S(t) \|_{\linopo{X}} \le  K_1(\omega, M, \alpha) \mexp^{-\omega t}+ K_2(\omega, M, \alpha, D, \theta) \mexp^{-\omega t} t^{-\alpha}.
    \end{equation*}
\end{lemma}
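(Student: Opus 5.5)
The plan is to write the positive fractional power as a negative fractional power composed with the generator and to split the resulting Balakrishnan-type integral at a convenient point. Since $\alpha\in(0,1)$, we have $(-\Abase)^\alpha \S(t) = (-\Abase)^{-(1-\alpha)}(-\Abase)\S(t)$ on $X$; this is legitimate because $\range(\S(t))\subseteq D(\Abase)$ for $t>0$. For the negative power of order $1-\alpha$ we use the semigroup representation \eqref{eq:fractional_power_integral_semigroup}:
\begin{equation*}
    (-\Abase)^\alpha \S(t) x = \frac{1}{\Gamma(1-\alpha)} \int_0^\infty s^{-\alpha} \S(s)(-\Abase)\S(t) x \ds = \frac{1}{\Gamma(1-\alpha)} \int_0^\infty s^{-\alpha} (-\Abase)\S(t+s) x \ds .
\end{equation*}
The task is then reduced to estimating $\|\Abase\S(t+s)\|_{\linopo{X}}$ with a bound that is integrable against $s^{-\alpha}$ near $s=0$ and at $s=\infty$. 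It must also decay like $\mexp^{-\omega t}$ and carry the factor $t^{-\alpha}$.

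The key step is the bound on $\Abase\S(\tau)$. Write $\tau=t+s$ and factor the semigroup as $\S(\tau)=\S(\tau-r)\S(r)$ for a suitable splitting $r$. We combine the exponential type, $\|\S(\tau-r)\|\le M\mexp^{-\omega(\tau-r)}$, with the analytic smoothing estimate \eqref{eq:AS_bounded}. In that estimate the constant $M/(\pi|\cos\theta|)$ is replaced by the resolvent constant $D$: in the Cauchy integral $\Abase\S(r)=\frac{1}{2\pi i}\int_\Gamma \lambda\mexp^{\lambda r}R(\lambda,\Abase)\dlambda$ along rays of angle $\theta$, one has $\|\lambda R\|\le D$, which gives $\|\Abase\S(r)\|\le \frac{D}{\pi|\cos\theta|}\,r^{-1}$.

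The aim is to arrange the two pieces so that the final estimate reads
\begin{equation*}
    \|\Abase\S(t+s)\|\le \omega M\mexp^{-\omega(t+s)} + \frac{D}{\pi|\cos\theta|}\,\frac{\mexp^{-\omega t}}{t+s}.
\end{equation*}
A natural way to get the first term is to shift the generator to $\Abase+\omega$: write $\Abase\S = (\Abase+\omega)\S - \omega\S$, so that the $-\omega\S$ part contributes $\omega M\mexp^{-\omega(t+s)}$. The rescaled semigroup $\mexp^{\omega\tau}\S(\tau)$ is bounded analytic, and its generator is to be handled via the resolvent bound, giving a term $\frac{D}{\pi|\cos\theta|}\tau^{-1}\mexp^{-\omega\tau}\le \frac{D}{\pi|\cos\theta|}\frac{\mexp^{-\omega t}}{t+s}$.

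With this estimate in hand, the two integrals are computed directly. The first gives $\frac{\omega M}{\Gamma(1-\alpha)}\mexp^{-\omega t}\int_0^\infty s^{-\alpha}\mexp^{-\omega s}\ds = K_1\mexp^{-\omega t}$. For the second, the substitution $s=t\sigma$ gives $\int_0^\infty s^{-\alpha}(t+s)^{-1}\ds = t^{-\alpha}\int_0^\infty \sigma^{-\alpha}(1+\sigma)^{-1}\,\mathrm{d}\sigma$, which is exactly $K_2\, t^{-\alpha}$, multiplied by $\mexp^{-\omega t}$.

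The main obstacle I expect is justifying the constant $D/(\pi|\cos\theta|)$ for the shifted generator $\Abase+\omega$. The hypothesis bounds $R(\lambda,-\Abase)$, not the resolvent of the shifted operator. The shift also moves the vertex of the sector, so the contour argument has to be run for $\Abase$ with the $\omega$-shift carried only through the type estimate. If this does not work cleanly, the fallback is to avoid the shift entirely. In that variant, split $\S(t+s)=\S(t+s-r)\S(r)$ with $r$ proportional to $t+s$ and use the type bound on the first factor. This may produce slightly different constants than $K_1,K_2$, but it has the same structure.
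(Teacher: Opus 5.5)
\textbf{Verdict: there is a genuine gap, and it cannot be closed.} Your main route is the paper's proof step for step: shift to $\Abase+\omega$, bound $\|\Abase\S(\tau)\|\le \omega M\mexp^{-\omega\tau}+\frac{D}{\pi|\cos\theta|}\tau^{-1}\mexp^{-\omega\tau}$, use the semigroup formula for $(-\Abase)^{\alpha-1}$, drop $\mexp^{-\omega s}$, and substitute $s=t\sigma$. The step you flag as the main obstacle is exactly the one the paper merely asserts, by applying \eqref{eq:AS_bounded} to $\Abase+\omega$ with $D$ in place of $M$. That intermediate bound is false, and so is the lemma itself.

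\textbf{Counterexample.} Take $X=\C$ and $\Abase=-z_0$ with $z_0=\omega+\im y_0$, $\omega>0$, $y_0\neq 0$. Then:
\begin{itemize}
\item $|\S(t)|=\mexp^{-\omega t}$, so $(1,-\omega)$ is a type.
\item $z\mapsto \mexp^{-z_0 z}$ is entire, so the semigroup is analytic of any angle.
\item For every $\sectorbound'>|\arg z_0|$ the resolvent hypothesis holds with some finite $D$, since $z_0$ has positive distance to the closed set $\C\setminus\othersector_{\sectorbound'}$.
\end{itemize}
However, $\Abase+\omega=-\im y_0$, so $|(\Abase+\omega)\mexp^{\omega t}\S(t)|=|y_0|$ for all $t$. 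Hence \eqref{eq:AS_bounded} fails for the shifted generator with every constant. Correspondingly, $|\Abase\S(\tau)|=|z_0|\mexp^{-\omega\tau}>(\omega+\frac{D}{\pi|\cos\theta|}\tau^{-1})\mexp^{-\omega\tau}$ for large $\tau$.

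The conclusion fails too. Since $\int_0^\infty s^{-\alpha}\mexp^{-\omega s}\ds=\Gamma(1-\alpha)\omega^{\alpha-1}$, we get $K_1=M\omega^\alpha=\omega^\alpha$. But $|(-\Abase)^\alpha\S(t)|=|z_0|^\alpha\mexp^{-\omega t}$, so the lemma would require $|z_0|^\alpha-\omega^\alpha\le K_2\,t^{-\alpha}$ for all $t>0$, which is false for large $t$. A $2\times 2$ rotation--scaling matrix gives the same failure in a real space.

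The mechanism is this. Spectrum of $\Abase$ on the line $\mathrm{Re}\,\lambda=-\omega$ off the real axis is compatible with type $(M,-\omega)$. But it places spectrum of $\Abase+\omega$ on the imaginary axis, so $\mexp^{\omega t}\S(t)$ is not a bounded analytic semigroup.

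\textbf{Your fallback.} This is the sound route, but it proves a different inequality, and two points need care.
\begin{itemize}
\item \emph{Contour angle.} The hypothesis bounds $\res{\lambda}{-\Abase}$ on $\C\setminus\overline{\othersector_{\sectorbound'}}$, i.e.\ $\res{\lambda}{\Abase}$ only for $|\arg\lambda|<\pi-\sectorbound'$. So even the unshifted estimate $\|\Abase\S(r)\|\le c/r$ with $c=D/(\pi|\cos\theta|)$ needs $\theta<\pi-\sectorbound'$, which the statement does not impose.
\item \emph{Choice of $r$.} Do not take $r$ proportional to $\tau$: with $r=\kappa\tau$ you only get decay rate $(1-\kappa)\omega$. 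Take $r=\min\{\tau,1/\omega\}$ in $\|\Abase\S(\tau)\|\le\|\S(\tau-r)\|\,\|\Abase\S(r)\|$.
\end{itemize}
With these, you get $\|\Abase\S(\tau)\|\le \mexp\,c\,(\tau^{-1}+M\omega)\mexp^{-\omega\tau}$. Your integration then gives
\[
\|(-\Abase)^\alpha\S(t)\|\le \mexp\,c\,K_1\mexp^{-\omega t}+\mexp\,K_2\,t^{-\alpha}\mexp^{-\omega t}.
\]
This is a valid corrected statement, and it is compatible with the example above. The constants $K_1,K_2$ as stated cannot be reached by any argument.
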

\begin{proof}
    We apply a suitable adaptation of \cite[proof of Ch. 2, Thm. 5.2, eq. 5.13]{Paz89}, i.e., defining $\hat{\Abase} \coloneqq \Abase + \omega I $ with associated semigroup $\hat{\S}(t) = \mexp^{-\omega t} \S(t)$ of type $(M, 0)$ to find
    \begin{align*}
        \|\Abase \S(t)\|_{\linopo{X}} &= \|(\hat{\Abase} - \omega I) \mexp^{-\omega t} \hat{\S}(t) \|_{\linopo{X}} \le  \mexp^{-\omega t} \|\hat{\Abase} \hat{\S}(t)\|_{\linopo{X}} + \omega  \mexp^{-\omega t} \|\hat{\S}(t)\|_{\linopo{X}} \\
        ^{\eqref{eq:AS_bounded}}& \le \mexp^{-\omega t} \frac{D}{\pi |\cos(\theta)|} t^{-1} + \mexp^{-\omega t} \omega M = \mexp^{-\omega t} \left(\omega M  + \frac{D}{\pi |\cos(\theta)|} t^{-1}\right)
    \end{align*}
    and (using the alternative representation of the negative fractional power~\eqref{eq:fractional_power_integral_semigroup})
    \begin{align*}
        \| (-\Abase)^\alpha \S(t) \|_{\linopo{X}} &= \|(-\Abase)^{\alpha-1} (-\Abase) \S(t)\|_{\linopo{X}}\\
        &= \left\| \frac{1}{\Gamma(1-\alpha)} \int_0^\infty r^{-\alpha}(-\Abase)\S(r+t) \dr\right\|_{\linopo{X}}\\ 
        &\le \frac{1}{\Gamma(1-\alpha)} \int_0^\infty r^{-\alpha} \|\Abase\S(r+t)\|_{\linopo{X}} \dr\\ 
        &\le \frac{1}{\Gamma(1-\alpha)} \int_0^\infty r^{-\alpha} \mexp^{-\omega(t+r)}\left[\omega M +\frac{D}{\pi |\cos(\theta)|} (t+r)^{-1}\right] \dr \\
        &\overset{(\ast)}{\le} \frac{ \omega M\mexp^{-\omega t} }{\Gamma(1-\alpha)} \int_0^\infty r^{-\alpha} \mexp^{-\omega r}\dr +\frac{D\mexp^{-\omega t}}{\Gamma(1-\alpha)\pi |\cos(\theta)|} \int_0^\infty r^{-\alpha} (t+r)^{-1} \dr \\
        &=\frac{ \omega M\mexp^{-\omega t} }{\Gamma(1-\alpha)} \int_0^\infty r^{-\alpha} \mexp^{-\omega r}\dr + \frac{D t^{-\alpha} \mexp^{-\omega t}}{\Gamma(1-\alpha)\pi |\cos(\theta)|} \int_0^\infty s^{-\alpha} (1+s)^{-1} \ds 
    \end{align*}
    where $(\ast)$ holds because $\mexp^{\omega r} \le 1$ for $r\ge 0$ and $\omega <0$. This shows the claim.
\end{proof}

\begin{remark}
    In the literature, e.g., in~\cite[Ch. 2, Thm. 6.13, Part (c)]{Paz89}, the contribution with factor $K_1$ is absorbed into the term scaled by $K_2$ by suitably increasing both $K_2$ and the exponential growth rate. This yields a bound of the form 
    \begin{equation*}
        \| (-\Abase)^\alpha \S(t) \|_{\linopo{X}} \le  C  t^{-\alpha} \mexp^{-\tilde{\omega} t}
    \end{equation*}
    with suitable $C>0$ and $\tilde{\omega} > \omega$. In principle, all the following computations can be performed using this bound as well. Nevertheless, as we will approximate $\omega$, we do not adopt this simplification here to maintain computability.
\end{remark}

\begin{lemma}
    \label{lem:estimate_D}
    Let the setting be as described in \Cref{problem:cs} and $\sectorbound^\prime \in (\sectorbound, \pi)$. Then, 
    \begin{equation*}
        \|\res{\lambda}{-\Abase}\|_{\linopo{X}} \le \frac{\hat{D}}{|\lambda| + 1} \quad \text{with} \quad D = \mue\mup \sup_{n\in \N} D_n
    \end{equation*}
    for $\lambda \in \C \setminus \overline{\othersector_{\sectorbound^\prime}}$.
\end{lemma}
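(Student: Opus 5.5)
The plan is a pointwise limit argument in $\lambda$. We use the pointwise convergence of the resolvents, which follows from the Trotter--Kato theorem together with \ref{ass::convergence-semigroups} (see the remarks after \Cref{problem:cs}). We then pass the uniform bounds $D_n$ through the operators $\En$ and $\Pn$, whose norms are uniformly bounded by \Cref{ass:approximating-banach-spaces}.

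First fix $\lambda \in \C\setminus\overline{\othersector_{\sectorbound^\prime}}$ and $x\in X$. By \ref{ass:uniform-sectoriality} and the first remark after \Cref{problem:cs},
\begin{equation*}
\|\res{\lambda}{-\Anbase}\|_{\linopo{\Xn}} \le \frac{D_n}{|\lambda|+1}\le \frac{\sup_{m\in\N} D_m}{|\lambda|+1},
\end{equation*}
with $\sup_m D_m<\infty$. Combining this with (TK1) gives
\begin{equation*}
\|\En \res{\lambda}{-\Anbase}\Pn x\|_X \le \mue\mup \frac{\sup_m D_m}{|\lambda|+1}\|x\|_X
\end{equation*}
for every $n$.

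Second, we pass to the limit $n\to\infty$. We need $\En\res{\lambda}{-\Anbase}\Pn x \to \res{\lambda}{-\Abase}x$ for each such $\lambda$, i.e.\ $\res{\lambda}{-\Anbase}\narrowinftySOT{X}{X}\res{\lambda}{-\Abase}$. Norm continuity then yields
\begin{equation*}
\|\res{\lambda}{-\Abase}x\|_X \le \mue\mup\sup_m D_m\,\frac{\|x\|_X}{|\lambda|+1}.
\end{equation*}
Taking the supremum over $\|x\|_X\le 1$ gives the claim with $\hat D = D=\mue\mup\sup_n D_n$.

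The main obstacle is justifying this resolvent convergence on the whole region $\C\setminus\overline{\othersector_{\sectorbound^\prime}}$, not only for large real $\lambda$, which is what the Trotter--Kato theorem directly provides. We handle it in two steps.

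\emph{Step (a): large real $\lambda$.} The uniform sectoriality in \ref{ass:uniform-sectoriality} gives uniform growth bounds on the semigroups $\Sn$. Hence the Laplace transform representation of the resolvent converges by dominated convergence, using the semigroup convergence in \Cref{def:convergence-semigroups}.

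\emph{Step (b): the whole region.} We extend the convergence via the resolvent identity and a Neumann series argument. The uniform bound above controls the series uniformly in $n$, so the set of $\lambda$ where strong convergence holds is open and closed in the connected set $\C\setminus\overline{\othersector_{\sectorbound^\prime}}$.

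A Vitali-type argument works equally well. The functions $\lambda\mapsto\En\res{\lambda}{-\Anbase}\Pn x$ are analytic and locally uniformly bounded, so convergence on a set with an accumulation point gives convergence everywhere on the connected domain. If needed, \ref{ass:norm-convergence-resolvent-path} can also supply convergence on $\hat\path$ directly.
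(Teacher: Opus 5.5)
Your proof is correct and follows the paper's argument: you combine strong convergence $\En\res{\lambda}{-\Anbase}\Pn x\to\res{\lambda}{-\Abase}x$ with the uniform bound $\mue\mup\sup_n D_n/(|\lambda|+1)$ and pass to the limit for each fixed $x$, which the paper phrases via Banach--Steinhaus. Your steps (a)--(b) also justify what the paper only asserts, namely that the Trotter--Kato resolvent convergence, which for $\res{\lambda}{-\Abase}=-\res{-\lambda}{\Abase}$ is obtained on $\mathrm{Re}\,\lambda<0$, extends to all of $\C\setminus\overline{\othersector_{\sectorbound^\prime}}$; the Neumann-series step works because $\Pn\En=\Id$ gives $\En\res{\lambda_0}{-\Anbase}^{k}\Pn=(\En\res{\lambda_0}{-\Anbase}\Pn)^{k}$.
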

\begin{proof}
    Since by \ref{ass::convergence-semigroups} implies pointwise convergence of the resolvents in $X$, we can apply the Banach--Steinhaus theorem to $\En(|\lambda|+1)\res{\lambda}{-\Anbase}\Pn$ and conclude that for $\lambda \in \C \setminus \overline{\othersector_{\sectorbound^\prime}}$
    \begin{align*}
    	\|(|\lambda| +1)\res{\lambda}{-\Abase}\|_{\linopo{X}}
        &= \sup_{n\rightarrow\infty}\|\En(|\lambda| +1)\res{\lambda}{-\Anbase}\Pn\|_{\linopo{X}} \\
        & \le \mue\mup \sup_{n\rightarrow\infty} D_n = \hat{D}. \qedhere
    \end{align*}
\end{proof}
A similar result as the following is given in \cite[Thm.~2.4]{FujSS01}, where it is formulated quantitatively for finite element discretizations.
\begin{lemma} 
    \label{lem:frac_power_neg}
    Let the setting be as described in \Cref{problem:cs}. For $\alpha \in (0,1)$, the negative fractional powers converge in the norm topology
    \begin{equation*}
        (-\Anbase)^{-\alpha} \narrowinftyNT{X}{X} (-\Abase)^{-\alpha} .
    \end{equation*}
\end{lemma}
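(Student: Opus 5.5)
The plan is to use the contour-integral representation of \Cref{def:fractional-power} along the path $\hat{\path}$ supplied by \ref{ass:norm-convergence-resolvent-path}. The statement is read as $\|\En(-\Anbase)^{-\alpha}\Pn - (-\Abase)^{-\alpha}\|_{\linopo{X}} \to 0$. We write
\begin{equation*}
  \En(-\Anbase)^{-\alpha}\Pn - (-\Abase)^{-\alpha} = \frac{1}{2\pi i}\int_{\hat{\path}} s^{-\alpha}\bigl(\En \res{s}{-\Anbase}\Pn - \res{s}{-\Abase}\bigr)\ds,
\end{equation*}
which holds because $\En$ and $\Pn$ are bounded and linear, so they commute with the Bochner integral. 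First we must check that $\hat{\path}$ is admissible for every $n$ simultaneously. It runs in $\C\setminus\overline{\othersector_\sectorbound}$, which lies in the resolvent sets of all $-\Anbase$ and of $-\Abase$ by \ref{ass:uniform-sectoriality}. By Cauchy's theorem the value of the integral does not depend on the admissible path, so the same $\hat{\path}$ may be used for all operators.

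The key step is to split $\hat{\path}$ into a bounded part $\hat{\path}_R=\hat{\path}\cap\{|s|\le R\}$ and the two unbounded tails $\hat{\path}\cap\{|s|>R\}$.

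\emph{Bounded part.} Here $|s^{-\alpha}|$ and the arc length are bounded. The only possible issue is a small arc of $\hat{\path}$ near $0$, which stays away from the spectrum because $0\notin\sigma$ under the sectorial estimate $\|\res{\lambda}{-\Anbase}\|\le D_n/(|\lambda|+1)$. On this part the integrand is bounded by $C_R\sup_{\lambda\in\hat{\path}}\|\En \res{\lambda}{-\Anbase}\Pn - \res{\lambda}{-\Abase}\|$, which tends to $0$ by \ref{ass:norm-convergence-resolvent-path}.

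\emph{Tails.} We do not use convergence here but uniform smallness. By the remark after \Cref{problem:cs} and \Cref{lem:estimate_D}, the integrand is bounded by $|s|^{-\alpha}(\mue\mup\sup_n D_n+\hat{D})(|s|+1)^{-1}$. This is integrable at infinity along rays, since the exponent is $-1-\alpha<-1$. Given $\varepsilon>0$, we choose $R$ so that the tails contribute less than $\varepsilon$ uniformly in $n$, and then take $n$ large so that the bounded part is also less than $\varepsilon$.

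The main obstacle is ensuring that the same path works uniformly. This means checking that $\hat{\path}$ avoids a neighbourhood of $0$, or that the resolvent bound $D/(|\lambda|+1)$ controls $|s|^{-\alpha}$ near the origin. Near the origin, $|s|^{-\alpha}$ is integrable along the path and the resolvent bound is uniform in $n$, so even a path passing close to $0$ is handled by dominated estimates. A further point to verify is that the uniform bound $\sup_n D_n<\infty$ extends from $\C\setminus\overline{\othersector_{\sectorbound'}}$ to the whole of $\hat{\path}$. We handle this by choosing $\sectorbound'\in(\sectorbound,\delta)$ and noting that the part of $\hat{\path}$ inside $\othersector_{\sectorbound'}$ is compact, where norm convergence of the resolvents already gives uniform boundedness.
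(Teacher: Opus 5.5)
Your proof is correct and follows the paper's route. Both arguments write $\En(-\Anbase)^{-\alpha}\Pn-(-\Abase)^{-\alpha}$ as the contour integral of $s^{-\alpha}$ times the resolvent difference along the path $\hat{\Gamma}$ from \ref{ass:norm-convergence-resolvent-path}, dominate the integrand by the $n$-uniform bound $C|s|^{-\alpha}(|s|+1)^{-1}$, and conclude from the convergence in \ref{ass:norm-convergence-resolvent-path}; your split into a compact piece plus tails is just an unpacked form of the paper's dominated-convergence step, and it is slightly more careful about invoking the resolvent bounds only on the part of $\hat{\Gamma}$ where they hold.
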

\begin{proof}
    Starting from \eqref{eq:fractional_power_line_integral}, we obtain for $x \in X$ and $\hat{\path}$ according to \ref{ass:norm-convergence-resolvent-path}
    \begin{align*}
        & \phantom{=} \left\|(-\Abase)^{-\alpha} - \En (-\Anbase)^{-\alpha} \Pn \right\|_{\linopo{X}} \le  \frac{1}{2\pi \im} \int_{\hat{\path}} \lambda^{-\alpha} \left\|R(\lambda,-\Abase) - \En R(\lambda, -\Anbase) \Pn\right\|_{\linopo{X}}  \dlambda
    \end{align*}
    with in which the integrand is bounded for $\lambda \in \hat{\path}$ and for all $n\in \N$ by
    \begin{align*}
        \lambda^{-\alpha} \left\|R(\lambda, -\Abase) - \En R(\lambda, -\Anbase) \Pn\right\|_{\linopo{X}} &\le  \lambda^{-\alpha} \biggl [ \left\|R(\lambda, -\Abase) \right\|_{\linopo{X}} + \left\| \En R(\lambda, -\Anbase) \Pn \right\|_{\linopo{X}}\biggr ] \\
        &\le  \lambda^{-\alpha} \left[ \frac{D}{\lambda +1 } + \frac{\mue\mup\sup_{n\in \N}D_n}{\lambda + 1}\right] \\
        & = \frac{(D+ \mue\mup\sup_{n\in\N}D_n)\lambda^{-\alpha}}{\lambda + 1} \eqcolon g_\alpha(\lambda).
    \end{align*} 
    Since $g_\alpha(\lambda)$ is integrable from 0 to $\infty$ for $\alpha \in (0,1)$, the dominated convergence theorem~(DCT) and \ref{ass:norm-convergence-resolvent-path} imply
    \begin{multline*}
        \lim_{n\rightarrow\infty} \left\|(-\Abase)^{-\alpha} - \En (-\Anbase)^{-\alpha} \Pn \right\|_{\linopo{X}} \\ 
        \overset{\text{DCT}}{\le} \tfrac{1}{2\pi \im}  \int_{\hat{\Gamma}}\lambda^{-\alpha}  \lim_{n\rightarrow\infty} \left\|R(\lambda, -\Abase) - \En R(\lambda, -\Anbase) \Pn  \right\|_{X}  \dlambda \overset{\mathrm{\ref{ass::convergence-semigroups}}}{=} 0.
    \end{multline*}
    Hence, $(-\Anbase)^{-\alpha}$ converges in the norm topology to $(-\Abase)^{-\alpha}$.
\end{proof}

\begin{lemma}
    \label{lem:EnBn_in_Xmin}
    For a setup as described in \Cref{problem:cs} with $\alpha \in (0,1)$ such that \ref{ass:convergence-control-operators} is fulfilled, it holds that
    \begin{equation*}
        \|\Bshort \|_{\linop{\tilde{U}}{\Xminalpha}} \le \mue \lim_{n\rightarrow \infty} \| (-\Anbase)^{-1+\alpha} \Bnshort \|_{\linop{\tilde{U}}{\Xn}}.
    \end{equation*}
\end{lemma}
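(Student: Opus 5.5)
We fix $u\in\tilde U$ and compare $(-\Abase)^{-1+\alpha}\Bshort u$ with $(-\Abase)^{-1+\alpha}\En\Bnshort u$, the latter being an element of $X$ since $\En\Bnshort u\in X$. By definition of the norm on $\Xminalpha$ we have $\|\Bshort u\|_{\Xminalpha}=\|(-\Abase)^{-1+\alpha}\Bshort u\|_X$, and \ref{ass:convergence-control-operators} states exactly that
\begin{equation*}
  \|(-\Abase)^{-1+\alpha}(\En\Bnshort u-\Bshort u)\|_X\narrowinfty 0 .
\end{equation*}
Hence $\|\Bshort u\|_{\Xminalpha}=\lim_{n\to\infty}\|(-\Abase)^{-1+\alpha}\En\Bnshort u\|_X$. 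It therefore remains to replace $(-\Abase)^{-1+\alpha}\En$ by $\En(-\Anbase)^{-1+\alpha}$, up to an error that vanishes uniformly in $\|u\|_{\tilde U}\le 1$ or at least pointwise in $u$.

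For this replacement we split
\begin{equation*}
(-\Abase)^{-1+\alpha}\En\Bnshort u=\En(-\Anbase)^{-1+\alpha}\Pn\En\Bnshort u+\bigl[(-\Abase)^{-1+\alpha}-\En(-\Anbase)^{-1+\alpha}\Pn\bigr]\En\Bnshort u .
\end{equation*}
Since $\En$ is a right inverse of $\Pn$, we have $\Pn\En=\mathcal I_{\Xn}$, so the first term equals $\En(-\Anbase)^{-1+\alpha}\Bnshort u$. Its norm is at most $\mue\|(-\Anbase)^{-1+\alpha}\Bnshort\|_{\linop{\tilde U}{\Xn}}\|u\|$ by (TK1). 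This is where the factor $\mue$ in the claim comes from. The second term is controlled by \Cref{lem:frac_power_neg} applied with exponent $1-\alpha\in(0,1)$, which gives norm convergence to zero of the operator in brackets. It then suffices to know that $\|\En\Bnshort u\|_X$ stays bounded in $n$.

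The main obstacle is this boundedness. We only know that $\En\Bnshort u$ converges in $\Xminalpha$, not in $X$; in the genuinely unbounded case $\|\En\Bnshort u\|_X$ may blow up. I would therefore avoid the naive split and commute the resolvent-type operators differently. Write $(-\Abase)^{-1+\alpha}=(-\Abase)^{-1+\alpha}$ acting on $y_n\coloneqq\En\Bnshort u$. Then express $\En(-\Anbase)^{-1+\alpha}\Pn y_n$ via the contour integral of \Cref{def:fractional-power} along $\hat\path$, and compare it termwise with $R(\lambda,-\Abase)y_n$ using \ref{ass:norm-convergence-resolvent-path}. If this still requires $X$-bounds, the fallback is to note that the limit of $\|(-\Anbase)^{-1+\alpha}\Bnshort\|$ is assumed to exist in the statement. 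In that case the cross term is $o(1)$ whenever $\sup_n\|y_n\|_X<\infty$, and otherwise one argues on a dense set of $u$ (or with finite-rank reductions) where this holds. After the error term is handled, we take $\limsup$, obtain $\|\Bshort u\|_{\Xminalpha}\le\mue\lim_n\|(-\Anbase)^{-1+\alpha}\Bnshort\|\,\|u\|$, and take the supremum over $\|u\|\le1$ to conclude.
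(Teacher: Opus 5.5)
There is a genuine gap, and it is the one you flag yourself. The replacement of $(-\Abase)^{-1+\alpha}\En\Bnshort u$ by $\En(-\Anbase)^{-1+\alpha}\Bnshort u$ is never carried out, and neither of your remedies works.

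\emph{Contour comparison.} Comparing along $\hat{\path}$ only uses \ref{ass:norm-convergence-resolvent-path}, which is an estimate in $\linopo{X}$. The termwise error is therefore again $o(1)\cdot\|\En\Bnshort u\|_X$, which need not tend to zero.

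\emph{Density fallback.} This is empty in the case that matters. Suppose $X$ is reflexive and $\sup_n\|\En\Bnshort u\|_X<\infty$. Then a weakly convergent subsequence in $X$ has a weak limit that must coincide with the $\Xminalpha$-limit $\Bshort u$ from \ref{ass:convergence-control-operators}, so $\Bshort u\in X$. For Dirichlet control one has $\range\Bshort\cap X=\{0\}$, so only $u=0$ qualifies.

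\emph{The paper's route.} The paper regroups the cross term as $\bigl(\En(-\Anbase)^{-1+\alpha}\Pn(-\Abase)^{1-\alpha}-\Id\bigr)w_n$ with $w_n\coloneqq(-\Abase)^{-1+\alpha}\En\Bnshort u$. This is the right move, since $w_n$ is controlled by \ref{ass:convergence-control-operators}. It then asserts that the operator factor tends to $0$ in $\linopo{X}$ by \Cref{lem:frac_power_neg}. That does not follow: right-multiplication by the unbounded $(-\Abase)^{1-\alpha}$ destroys norm convergence. Moreover, a finite-rank operator minus $\Id$ has norm at least $1$ on an infinite-dimensional space; evaluate it on a unit vector in its kernel.

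In fact the statement does not follow from \ref{ass:cs:approximating-banach-spaces}--\ref{ass:convergence-control-operators} alone. Here is a counterexample.
\begin{itemize}
\item Take $X=\ell^2$, $-\Abase=\mathrm{diag}(k)_{k\in\N}$, $U=\R$ and $\Bshort u=u\,\unitvector{1}$.
\item Let $\Xn=\R^n$, with $\Pn$ the truncation and $\En$ the zero extension, so $\mue=\mup=1$.
\item Set $\beta\coloneqq 1-\alpha$, $N_n\coloneqq 2n^{\beta/2}$ and $\Bnshort u\coloneqq u(\unitvector{1}+N_n\unitvector{n})$.
\item Let $T_n$ be $\mathrm{diag}(k^{-\beta})_{k\le n}$ with the $(1,n)$ and $(n,1)$ entries set to $-1/N_n$, and define $-\Anbase\coloneqq T_n^{-1/\beta}$.
\end{itemize}
The matrix $T_n$ is symmetric positive definite, since its $\{1,n\}$-block has determinant $\tfrac34 n^{-\beta}$. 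Its eigenvectors deviate from $\unitvector{1},\unitvector{n}$ by $O(n^{-\beta/2})$. The eigenvalues of $-\Anbase$ are $1+O(n^{-\beta})$, $k$ for $2\le k\le n-1$, and one of order $n$. Hence \ref{ass:cs:approximating-banach-spaces}--\ref{ass:norm-convergence-resolvent-path} hold, with resolvent error $O(n^{-\beta/2})$ on a suitable $\hat{\path}$. Also \ref{ass:convergence-control-operators} holds, because $\|(-\Abase)^{-\beta}N_n\unitvector{n}\|=N_nn^{-\beta}\to0$. Yet
$(-\Anbase)^{-1+\alpha}\Bnshort u=T_n(\unitvector{1}+N_n\unitvector{n})u=\tfrac32 n^{-\beta/2}u\,\unitvector{n}\to0$, while $\|\Bshort\|_{\linop{U}{\Xminalpha}}=1$, so the claimed inequality fails.

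What is missing is an extra hypothesis such as $\sup_n\|\En(-\Anbase)^{-1+\alpha}\Pn(-\Abase)^{1-\alpha}\|_{\linopo{X}}<\infty$; in the example this quantity grows like $n^{\beta/2}$. With it, the argument closes as follows.
\begin{itemize}
\item Set $F_n\coloneqq\En(-\Anbase)^{-1+\alpha}\Pn(-\Abase)^{1-\alpha}-\Id$. It is uniformly bounded by the hypothesis.
\item $F_n$ tends to $0$ strongly: apply \Cref{lem:frac_power_neg} on the dense set $\range((-\Abase)^{-1+\alpha})$ and extend by the uniform bound.
\item Since $w_n$ actually \emph{converges} in $X$ by \ref{ass:convergence-control-operators}, $F_nw_n\to0$.
\end{itemize}
This gives $\En(-\Anbase)^{-1+\alpha}\Bnshort u\to(-\Abase)^{-1+\alpha}\Bshort u$ in $X$, and hence the claimed bound with $\liminf$ in place of $\lim$.
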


\begin{proof}
    When considering
    \begin{align*}
        &\quad\; \|((-\Abase)^{-1+\alpha} \Bshort - \En (-\Anbase)^{-1+\alpha} \Bnshort) u\|_X \\
        &\le \| ((-\Abase)^{-1+\alpha} \Bshort - (-\Abase)^{-1+\alpha} \En \Bnshort) u \|_X + \| ((-\Abase)^{-1+\alpha} \En \Bnshort -\En (-\Anbase)^{-1+\alpha} \Bnshort) u\|_X \\
        &= \| (\Bshort - \En\Bnshort)u \|_{\Xminalpha} + \| (\En (-\Anbase)^{-1+\alpha} \Pn (-\Abase)^{1-\alpha} - \Id)(-\Abase)^{-1+\alpha}\En\Bnshort u \|_X \\
        &\le\| (\Bshort - \En\Bnshort)u \|_{\Xminalpha} + \| \En (-\Anbase)^{-1+\alpha} \Pn (-\Abase)^{1-\alpha} - \Id\|_{\linopo{X}} \underbrace{\|(-\Abase)^{-1+\alpha}\En\Bnshort u \|_X }_{\le \text{const.}},
    \end{align*}
    the first term vanishes with $n\rightarrow \infty$ according to \ref{ass:convergence-control-operators} and the right most factor is bounded due to the same reason. Finally, the factor 
    \begin{align*}
    	\| \En (-\Anbase)^{-1+\alpha} \Pn (-\Abase)^{1-\alpha} - \Id\|_{\linopo{X}}
    \end{align*}
    vanishes with $n\rightarrow\infty$ due to \ref{ass:norm-convergence-resolvent-path} and \Cref{lem:frac_power_neg}. Hence, we conclude
    \begin{align*} 
    	\En (-\Anbase)^{-1+\alpha} \Bnshort \narrowinftySOT{U}{X} (-\Abase)^{-1+\alpha} \Bshort,
    \end{align*}
    which completes the proof.
\end{proof}

\begin{lemma}
    \label{lem:flow_no_x_0_estimation_2}
    For a setup as described in \Cref{problem:cs} with
    \begin{equation*}
        \| (-\Abase)^\alpha \S(t)  \|_{\linopo{X}} \le  K_1 \mexp^{- \omega t} + K_2 t^{-\alpha} \mexp^{- \omega t}
    \end{equation*}
    with $\alpha \in (0,1)$, define
    \begin{equation*}
        \kappa(t) \coloneqq \frac{K_1}{\omega}  + K_2 \omega^{-\alpha} \Gamma(\alpha).
    \end{equation*}
    Then, the mild solution to \eqref{eq:cs_base} with initial value $\state_0 = 0$ can be limited for $u \in \ZUt$ by 
    \begin{equation*}
        \| \flowx{t}{0}{u}  \|_X
        \le  \kappa(t)\left( \| \Bshort - \En \Bnshort\|_{\linop{U}{X_{-1+\alpha}}} + \| \En \Bnshort \|_{\linop{U}{X_{-1+\alpha}}}\right) \|u\|_{\ZUt}
    \end{equation*}
    for every $n\in\N$.
\end{lemma}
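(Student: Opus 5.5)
The plan is to start from the variation-of-constants formula \eqref{eq:mild-solution} with $\stateinit=0$ and use the factorization already displayed at the beginning of this section, following \cite[Prop.\,2.13]{Sch20}. For almost every $s\in(0,t)$ we write
\begin{equation*}
\Smin(t-s)\Bshort u(s) = (-\Abase)^{1-\alpha}\S(t-s)\,(-\Abase)^{-1+\alpha}\Bshort u(s).
\end{equation*}
This identity uses that $(-\Abase)^{-1+\alpha}\Bshort u(s)\in X$ by \ref{ass:convergence-control-operators}, that fractional powers commute with the (extrapolated) semigroup, and that analyticity gives $\range(\S(t-s))\subseteq D(\Abase)$. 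Taking norms under the Bochner integral then yields
\begin{equation*}
\|\flowx{t}{0}{u}\|_X \le \int_0^t \|(-\Abase)^{1-\alpha}\S(t-s)\|_{\linopo{X}}\,\|\Bshort\|_{\linop{U}{\Xminalpha}}\,\|u(s)\|_U \ds .
\end{equation*}

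Next, for fixed $n$ we split $\Bshort=(\Bshort-\En\Bnshort)+\En\Bnshort$ and apply the triangle inequality in $\linop{U}{\Xminalpha}$. The first summand is finite by \ref{ass:convergence-control-operators} together with the uniform boundedness principle. The second is finite because $\En\Bnshort$ is bounded into $X\hookrightarrow\Xminalpha$ (the space $\Xn$ being finite-dimensional). We bound $\|u(s)\|_U$ by $\|u\|_{\ZUt}$ and pull it out of the integral. What remains is the scalar integral of the smoothing estimate from the hypothesis (equivalently, \Cref{lem:growth_bound_frac_power}), applied with exponent $1-\alpha$ in place of $\alpha$, since that is the power acting on $\S(t-s)$.

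For the scalar integral we substitute $r=t-s$ and extend the domain to $(0,\infty)$, which gives
\begin{equation*}
\int_0^t \bigl(K_1\mexp^{-\omega r}+K_2 r^{-\alpha}\mexp^{-\omega r}\bigr)\dr \le \frac{K_1}{\omega}+K_2\,\omega^{\alpha-1}\Gamma(1-\alpha).
\end{equation*}
This requires a decaying semigroup, with $\omega>0$ read as the decay rate, i.e.\ type $(M,-\omega)$ in the paper's convention. The result is the constant $\kappa$, which is independent of $t$, so the estimate is uniform in $t$ as needed for an \ISS gain.

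The main obstacle is bookkeeping rather than analysis, namely making the exponent conventions consistent. The hypothesis is stated with $(-\Abase)^\alpha$ and $t^{-\alpha}$, whereas the integrand carries $(-\Abase)^{1-\alpha}$ while $\Bshort$ maps into $\Xminalpha$. I will apply the hypothesis with the exponent renamed, i.e.\ with $1-\alpha$ and the corresponding constants $K_1,K_2$. The Gamma-function identity $\int_0^\infty r^{-a}\mexp^{-\omega r}\dr=\omega^{a-1}\Gamma(1-a)$ then produces the stated form of $\kappa$ after matching the notation. I will also check that the sign of $\omega$ is chosen so that the integrals converge, since integrability at $r=0$ only needs $a<1$.
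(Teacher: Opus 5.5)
Your argument is correct and is essentially the paper's proof. Both factor $\Smin(t-s)\Bshort u(s)$ through $(-\Abase)^{1-\alpha}\S(t-s)$ and $(-\Abase)^{-1+\alpha}$, split $\Bshort=(\Bshort-\En\Bnshort)+\En\Bnshort$, and evaluate the scalar integral via the Gamma function; the paper splits the integral before estimating and you split the operator norm afterwards, which makes no difference. Your renaming of the exponent to $1-\alpha$ turns the integrand into $K_2 r^{\alpha-1}\mexp^{-\omega r}$ and gives $K_2\omega^{-\alpha}\Gamma(\alpha)$ rather than the $K_2\omega^{\alpha-1}\Gamma(1-\alpha)$ in your display. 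This is exactly what the paper does implicitly when it passes from $(t-s)^{-\alpha}$ to $(t-s)^{\alpha-1}$, and it is genuinely needed: with the hypothesis read literally for $(-\Abase)^{\alpha}$, the stated $\kappa$ already fails in the scalar case $-\Abase=\mu>1$ when $\alpha<\tfrac12$ and $t$ is large.
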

\begin{proof}
    Using $\Bshort = (\Bshort + \En\Bnshort)-\En\Bnshort$ and $\Linfty$-admissibility of the associated systems, we obtain
    \begin{align*}
        \|\flowx{t}{0}{u} \|_X &= \left\| \int_0^t {\Smin (t-s)\Bshort u(s)} \ds \right\| _{X}\\ & \le \left\| \int_0^t \Smin (t-s)(\Bshort - \En \Bnshort)u(s)\ds \right\|_X + \left\| \int_0^t \Smin (t-s)\En \Bnshort u(s)\ds \right\| _{X}. 
    \end{align*}
    Leveraging the analyticity of the semigroup and fractional power spaces, we bound the first term as
    \begin{align*}
        & \quad \left\|  \int_0^t \Smin(t-s)\left( \Bshort - \En \Bnshort \right)u(s) \ds \right\|_X \\ 
        &\le  \int_0^t \left\| \Smin(t-s)(-\Abase)^{1-\alpha} (-\Abase)^{-1 + \alpha}\left( \Bshort - \En \Bnshort \right)u(s)\right\|_{X}  \ds \\
        & \le \int_0^t \left\| \Smin(t-s)(-\Abase)^{1-\alpha}  \right\|_{\linop{(D((-\Abase)^{1-\alpha}), \|\cdot \|_X)}{X}} \| (-\Abase)^{-1 + \alpha}\left( \Bshort - \En \Bnshort \right) u(s)\| _X \ds\\
        & \le \int_0^t \left(K_1 \mexp^{- \omega (t-s)} + K_2 (t-s)^{-\alpha} \mexp^{- \omega (t-s)}\right) \| \Bshort - \En \Bnshort\|_{\linop{U}{\Xminalpha}} \|u(s) \|_{U} \ds\\
        & \le \|u\|_{\ZUt}\|\Bshort - \En \Bnshort \|_{\linop{U}{X_{-1+\alpha}}} \left[ \int_0^t K_1 \mexp^{- \omega (t-s)} \ds + \int_0^t K_2 (t-s)^{\alpha - 1} \mexp^{- \omega (t-s)} \ds \right]\\ 
        & \overset{r \coloneqq t-s}{=} \|u\|_{\ZUt}\|\Bshort - \En \Bnshort \|_{\linop{U}{X_{-1+\alpha}}}  \left[ -K_1 \int_{t}^0 \mexp^{- \omega r} \dr - K_2 \int_{t}^0 r^{\alpha-1} \mexp^{-\omega r} \dr \right]\\ 
        & \overset{z = \omega r}{=}  \|u\|_{\ZUt} \|\Bshort - \En \Bnshort \|_{\linop{U}{X_{-1+\alpha}}} \left[\frac{K_1}{\omega}\mexp^{-\omega r} \bigg|_{t}^0  - K_2 \omega^{-\alpha} \int_{t}^0 z^{\alpha-1} \mexp^{- z} \dz \right]\\
        & \le \|u\|_{\ZUt} \|\Bshort - \En \Bnshort \|_{\linop{U}{X_{-1+\alpha}}} \left[\frac{K_1}{\omega}  + K_2 \omega^{-\alpha} \Gamma(\alpha)
        \right].
    \end{align*}
    Similarily one yields for the second term 
    \begin{equation*}
        \left\|  \int_0^t \Smin(t-s)\En \Bnshort u(s) \ds \right\|_X \le \|u(s)\|_{\ZUt} \|\En \Bnshort \|_{\linop{U}{X_{-1+\alpha}}} \left[\frac{K_1}{\omega}  + K_2 \omega^{-\alpha} \Gamma(\alpha)\right],
    \end{equation*}
    which shows the desired inequality.
\end{proof}

Summarizing the previous results, we obtain the following solution to \Cref{problem:cs}.

\begin{theorem}
    \label{thm:dist_contr}
    Considering the setting described in \Cref{problem:cs} and assume further that $\{\Sn(t)\}_{t\ge 0}$ is of type $(M_n, -\omega_n)$ for $n \in\N$ and that $(M_n)_{n\in \N}$ and $(\omega_n)_{n\in \N}$ are Cauchy sequences. For fixed $\alpha\in(0,1)$ such that \ref{ass:convergence-control-operators} holds and $\theta \in \left(\tfrac{\pi}{2}, \tfrac{\pi}{2} + \sectorbound\right)$ define
    \begin{align*}
        \hat{M} & \coloneqq \mue \mup \lim_{n\rightarrow \infty} M_n , \quad &
        \hat{\omega} & \coloneqq \lim_{n \rightarrow \infty} \omega_n, \\
        \hat{D} &= \mue\mup\sup_{n\in  \N} \sup_{\lambda \in \C \setminus \overline{\othersector_\sectorbound}} \|\res{\lambda}{-\Anbase}\|_{\Xn}(|\lambda| +1), \quad &
        K_1 &\coloneqq \frac{ \hat{\omega} \hat{M} }{\Gamma(1-\alpha)} \int_0^\infty s^{-\alpha} \mexp^{-\hat{\omega} s}\ds,\\
        K_2 &\coloneqq \frac{\hat{D}}{\Gamma(1-\alpha)\pi \cos(\theta)} \int_0^\infty s^{-\alpha}  (1+s)^{-1} \ds, \quad &
        \kappa &\coloneqq \frac{K_1}{\omega}  + K_2 \hat{\omega}^{-\alpha} \Gamma(\alpha).
    \end{align*}
    Then, the control system $\Sigma(\Abase, \Bshort; X, U)$ is $\Linfty$-\ISS with \ISS gains
    \begin{equation}
    	\label{eqn:conv_char_fun}
        \hat{\beta}(s, t) \vcentcolon= \hat{M} \,\mexp^{-\hat{\omega} t} \,s, \quad 
        \hat{\gamma}(s) \vcentcolon= \mue \kappa \lim_{n\rightarrow\infty}\|(-\Anbase)^{-1+\alpha}\Bnshort\|_{\linop{\Xn}{U}}s.
    \end{equation}
\end{theorem}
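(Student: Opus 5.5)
The proof splits the mild solution as in the discussion after \Cref{problem:bcs}: $\|\flowx{t}{\stateinit}{\inp}\|_X \le \|\S(t)\stateinit\|_X + \|\flowx{t}{0}{\inp}\|_X$. The two terms are handled separately, the first giving $\hat\beta$ and the second giving $\hat\gamma$.

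\emph{The gain $\hat\beta$.} By \ref{ass::convergence-semigroups}, $\Sn(t)\to\S(t)$ in the sense of \Cref{def:convergence-semigroups}. Since $(M_n)$ and $(\omega_n)$ are Cauchy, \Cref{lem:omega_conv} (applied with growth rates $-\omega_n$) gives $\|\S(t)\|_{\linopo{X}}\le \hat M \mexp^{-\hat\omega t}$. Hence $\|\S(t)\stateinit\|_X\le\hat\beta(\|\stateinit\|_X,t)$. For $\hat\beta\in\calK\calL$ I need $\hat\omega>0$, which is implicit in the statement; otherwise $\kappa$ would not even be finite. I would state this explicitly.

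\emph{The gain $\hat\gamma$, step 1.} So $\S$ is of type $(\hat M,-\hat\omega)$. By \Cref{lem:estimate_D}, the resolvent bound $\|\res{\lambda}{-\Abase}\|\le \hat D(|\lambda|+1)^{-1}$ holds outside the sector with the constant $\hat D$ from the theorem. Feeding these into \Cref{lem:growth_bound_frac_power} with exponent $1-\alpha$ gives
\[
\|(-\Abase)^{1-\alpha}\S(t)\|_{\linopo{X}}\le K_1\mexp^{-\hat\omega t}+K_2t^{-(1-\alpha)}\mexp^{-\hat\omega t}.
\]
The constants are those of the theorem, up to the bookkeeping $\alpha\leftrightarrow 1-\alpha$ and $|\cos\theta|$.

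\emph{Step 2.} I will not use \Cref{lem:flow_no_x_0_estimation_2} with its splitting. Instead I apply its computation directly with $\Bshort$ itself:
\[
\|\flowx{t}{0}{\inp}\|_X\le \int_0^t\|(-\Abase)^{1-\alpha}\S(t-s)\|\,\|(-\Abase)^{-1+\alpha}\Bshort\|_{\linop{U}{X}}\|\inp(s)\|_U\ds .
\]
Integrating over $[0,t]\subset[0,\infty)$ and using $\int_0^\infty r^{\alpha-1}\mexp^{-\hat\omega r}\dr=\hat\omega^{-\alpha}\Gamma(\alpha)$ bounds this by $\kappa\,\|\Bshort\|_{\linop{U}{\Xminalpha}}\|\inp\|_{\ZUt}$. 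The bound is uniform in $t$, as is needed for a $\calK$-gain. Equivalently, one can use \Cref{lem:flow_no_x_0_estimation_2} and let $n\to\infty$, since $\|\Bshort-\En\Bnshort\|\to0$ pointwise by \ref{ass:convergence-control-operators}. However, that convergence is only strong and not in operator norm, so the direct route is cleaner.

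\emph{Step 3.} Invoke \Cref{lem:EnBn_in_Xmin} to get $\|\Bshort\|_{\linop{U}{\Xminalpha}}\le \mue\lim_n\|(-\Anbase)^{-1+\alpha}\Bnshort\|$. This yields $\|\flowx{t}{0}{\inp}\|_X\le\hat\gamma(\|\inp\|_{\ZU})$. Since $\hat\gamma$ is linear with a nonnegative slope, it is in $\calK$ as soon as the slope is positive; if the slope is zero, replace it by any positive constant. Adding the two estimates gives \eqref{eq:iss-definition}.

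\emph{Main obstacle.} The delicate point is Step 3. \Cref{lem:EnBn_in_Xmin} only establishes strong convergence $\En(-\Anbase)^{-1+\alpha}\Bnshort u\to(-\Abase)^{-1+\alpha}\Bshort u$, not convergence of norms, so the existence of the limit in $\hat\gamma$ is not guaranteed.

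I would handle this by noting, for each $u$ with $\|u\|_U\le1$,
\[
\|(-\Abase)^{-1+\alpha}\Bshort u\|_X=\lim_n\|\En(-\Anbase)^{-1+\alpha}\Bnshort u\|_X\le\mue\liminf_n\|(-\Anbase)^{-1+\alpha}\Bnshort\|.
\]
Taking the supremum over $u$ gives the bound. So the theorem should be read with $\liminf$, or with the limit assumed to exist.

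Similarly, the uniform-in-$n$ constant $\hat D$ relies on the uniform sectoriality \ref{ass:uniform-sectoriality}, which is given by hypothesis.
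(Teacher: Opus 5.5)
\paragraph{Comparison with the paper's route.}
Your Step~2 departs from the paper's proof, and improves on it. The paper applies \Cref{lem:flow_no_x_0_estimation_2} for each $n$, i.e.\ it splits $\Bshort=(\Bshort-\En\Bnshort)+\En\Bnshort$, and then discards $\lim_n\|\Bshort-\En\Bnshort\|_{\linop{U}{\Xminalpha}}$. That step needs operator-norm convergence of the control operators (as the paper restates at the start of \Cref{sec:boundary-control-results}). Assumption (A5), however, only provides strong convergence; the two coincide when $\dim U<\infty$, as in the heat example.

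Estimating directly with $\Bshort$ needs only $\Bshort\in\linop{U}{\Xminalpha}$. It reaches the same intermediate bound $\kappa\|\Bshort\|_{\linop{U}{\Xminalpha}}\|\inp\|_{\ZU}$. The paper's splitting would only pay off as a finite-$n$ estimate when a rate for $\|\Bshort-\En\Bnshort\|$ is known.

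Your bookkeeping remarks are also correct. \Cref{lem:growth_bound_frac_power} must be used with exponent $1-\alpha$. The paper's \Cref{lem:flow_no_x_0_estimation_2} does this tacitly: it assumes a bound on $(-\Abase)^\alpha\S(t)$ but integrates $(t-s)^{\alpha-1}$. Moreover, $\cos\theta<0$ has to be read as $|\cos\theta|$, and $\hat\omega>0$ is needed.

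\paragraph{The gap in Step~3.}
The genuine gap, which you share with the paper, is Step~3, and the real problem there is not the existence of the limit. Your $\liminf$ argument uses $\En(-\Anbase)^{-1+\alpha}\Bnshort u\to(-\Abase)^{-1+\alpha}\Bshort u$ in $X$. The paper derives this from $\|\En(-\Anbase)^{-1+\alpha}\Pn(-\Abase)^{1-\alpha}-\Id\|_{\linopo{X}}\to0$. That operator contains the unbounded factor $(-\Abase)^{1-\alpha}$. The norm convergence in \Cref{lem:frac_power_neg} does not control it, because $\En\Bnshort u$ typically blows up in $X$.

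In fact (A1)--(A5) do not suffice. Take $X=\ell^2$ with $\Abase\unitvector{k}=-k\unitvector{k}$, $\Xn=\R^n$ with $\Pn$ the truncation and $\En$ the embedding, $U=\R$, $\beta\coloneqq1-\alpha$, and for $n\ge2$
\begin{equation*}
  T_n\coloneqq\mathrm{diag}(1,2^{-\beta},\dots,n^{-\beta})+\tfrac12 n^{-\beta/2}\bigl(\unitvector{1}\unitvector{n}^\top+\unitvector{n}\unitvector{1}^\top\bigr),\qquad \Anbase\coloneqq-T_n^{-1/\beta}.
\end{equation*}
Here $T_n$ is symmetric positive definite (its $\{\unitvector{1},\unitvector{n}\}$-block has determinant $\tfrac34 n^{-\beta}$), and $(-\Anbase)^{-\beta}=T_n$. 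On that block, $-\Anbase$ has eigenvalues $1+O(n^{-\beta})$ and $\asymp n$, with eigenvectors rotated by $O(n^{-\beta/2})$. Elsewhere it coincides with $-\Abase$. Hence (A1)--(A4) hold, with $M_n=1$ and $\omega_n\to1$.

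Now set $\Bshort u=-\tfrac12u\,\unitvector{1}$ and $\Bnshort u=(-\tfrac12\unitvector{1}+n^{\beta/2}\unitvector{n})u$. Then $\|(\En\Bnshort-\Bshort)u\|_{\Xminalpha}=n^{-\beta/2}|u|\to0$, so (A5) holds. But $(-\Anbase)^{-\beta}\Bnshort u=\tfrac34 n^{-\beta/2}u\,\unitvector{n}\to0$. Thus
\begin{equation*}
  \lim_n\|(-\Anbase)^{-1+\alpha}\Bnshort\|=0<\tfrac12=\|\Bshort\|_{\linop{U}{\Xminalpha}},
\end{equation*}
which contradicts \Cref{lem:EnBn_in_Xmin}. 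The theorem then yields $\hat\gamma\equiv0$, although for $u\equiv1$ we have $\flowx{t}{0}{u}=-\tfrac12(1-\mexp^{-t})\unitvector{1}$.

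So the statement needs an additional hypothesis, for example $\|\En(-\Anbase)^{-1+\alpha}\Pn x\|_X\le C\|x\|_{\Xminalpha}$ for all $x\in X$ and $n\in\N$. With this bound, density of $X$ in $\Xminalpha$ and \Cref{lem:frac_power_neg} give the required strong convergence, and your $\liminf$ estimate then goes through.
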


\begin{proof}
We know by \Cref{lem:omega_conv,lem:growth_bound_frac_power}, that 
\begin{equation} \label{eq:growthbounds_approx}
    \|\S(t)\|_{\linopo{X}} \le \hat{M} \mexp^{- \hat{\omega} t} ,\text{ and }\quad
    \| (-\Abase)^{\alpha} \S(t) \|_{\linopo{X}} \le K_1 \mexp^{- \hat{\omega} t}  + K_2 t^{-\alpha} \mexp^{- \hat{\omega} t}.
\end{equation}
Utilizing the linearity of the problem, we compute 
\begin{equation*} \label{eq:proof_split_x0u}
    \| \flowx{t}{\stateinit}{u} \|_X \le \|  \flowx{t}{\stateinit}{0}\|_X + \| \flowx{t}{0}{u}\|_X .
\end{equation*}
Applying \Cref{lem:flow_no_x_0_estimation_2} and equation~\eqref{eq:growthbounds_approx}, we obtain
\begin{align*} 
    \| \flowx{t}{\stateinit}{u} \|_X &\le \hat{M} \mexp^{-\hat{\omega} t} \|\stateinit \|_X +  \kappa  \lim_{n\rightarrow \infty}  (\|\Bshort - \En \Bnshort \|+ \|\En\Bnshort \|)  \|u(s)\|_{\ZUt} \\
    &= \hat{M} \mexp^{-\hat{\omega} t} \|\stateinit \|_X + \kappa\lim_{n\rightarrow \infty} \|\En\Bnshort \| \|u(s)\|_{\ZUt}
\end{align*}
where we droped the norm subscripts ${\linop{U}{X_{-1+\alpha}}}$ in the terms involving the control operator for readability. The second line follows from \ref{ass:convergence-control-operators} and \Cref{lem:EnBn_in_Xmin}.
\end{proof}

\section{Approximation of \ISS gains for boundary control systems}
\label{sec:boundary-control-results}

The previous section, specifically \Cref{thm:dist_contr}, provides a constructive answer to \Cref{problem:cs}. In particular, it shows that the \ISS gains of an infinite-dimensional control system with a weakly unbounded control operator can be computed by analyzing finite-dimensional approximations of \eqref{eq:cs_base} and then passing to the limit. Building on these results, we now turn to \Cref{problem:bcs}, which extends \Cref{problem:cs} by establishing the convergence of the control operators
\begin{equation}
    \label{eq:boundary-aim}\tag*{\ref{ass:convergence-control-operators}}
    \lim_{n \rightarrow \infty} \| \Bshort - \En \Bnshort\|_{\linop{U}{\Xminalpha}} = 0,
\end{equation}
from properties of the boundary operator such as \ref{ass:bcs:bop-right-invertible}. To bridge the view from \Cref{sec:distributed-control-results} to the boundary operator-focused view, recall that the (distributed) control operators are constructed using the Fattorini trick (\Cref{lem:fattorini-trick}), yielding
\begin{equation*}
    \Bshort = \Ainit \boprinv - \Amin \boprinv.
\end{equation*}
We will apply the same trick in the finite-dimensional setting. We use the one-dimensional heat equation in \Cref{example:boundary-discretization} to illustrate how the Fattorini trick applies in a concrete finite-difference setting. With this example in mind, we then set up different systems (see the forthcoming \Cref{fig:bcsystems}) that we use to establish the link between the boundary control system and its approximation via finite-dimensional control systems in \Cref{subsec:convergenceBounderyOp}, where we then also present the main result for this section, namely the convergence of the boundary control operators~\ref{eq:boundary-aim}.

\subsection{Motivational example}
\label{example:boundary-discretization}
Consider the one-dimensional heat equation on the spatial domain $\Omega = (0,1)$ with diffusion constant $\diffconst$ and Dirichlet boundary control, i.e.,
    \begin{equation}
    	\label{eqn:heat}
        \left\{\quad
        \begin{aligned}
            \statedt(t, \spaceVar) & = \diffconst \,\partial_\spaceVar^2 \state(t, \spaceVar) \quad&\mathrm{on}\; \timeInt \times (0,1),\\
            \state(t, 0) & = \inp_0(t) \quad&\mathrm{on}\; \timeInt,\\
            \state(t, 1) & = \inp_1(t) \quad&\mathrm{on}\; \timeInt, \\
            \state(0, \spaceVar) & = \stateinit \quad&\mathrm{on}\; (0,1),\\
        \end{aligned}\right.
    \end{equation}
    with inputs $\inp_0,\inp_1\colon\timeInt\to\R$.
    The corresponding control operator in the sense of~\eqref{eq:bcs_base} is given by the trace operator. In the forthcoming \Cref{fig:bcsystems}, the heat equation~\eqref{eqn:heat} takes the role of the boundary control system \sysA. For this problem, the structure of a classical finite-difference discretization (see \Cref{fig:motivation}), where the interval $[0, 1]$ is split into $n$ equally sized intervals of length $\tfrac{1}{n}$, results in the finite-dimensional control system
    \begin{equation}
        \label{eq:AnBn_DirichletHeat}
        \Anbase = \diffconst \,n^2 \begin{bmatrix}
            -2 & 1 & 0 & \cdots & 0 \\
            1 & -2 & 1 & \cdots & 0 \\
            0 & 1 & -2 & \cdots & 0 \\
            \vdots & \vdots & \vdots & \ddots & \vdots \\
            0 & 0 & 0 & \cdots & -2
        \end{bmatrix} \in \R^{(n-1) \times (n-1)}, \quad 
        \Bnshort = \diffconst n^2 \begin{bmatrix}
            1 & 0\\
            0 & 0\\
            \vdots & \vdots\\
            0 & 1
        \end{bmatrix} \in \R^{(n-1) \times 2}.
    \end{equation}
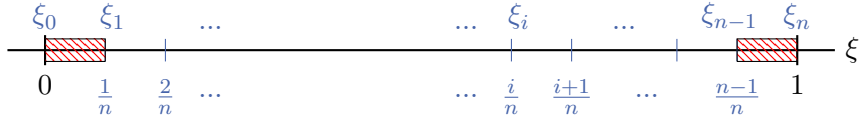
\begin{figure}[ht]
        \centering
        \begin{tikzpicture}
            \draw[-,thick] (-0.5,0)--(10.5,0) node[right]{$\coord$};
            \draw[-, thick] (0,-0.2)--(0,0.2) node[below, yshift=-0.4cm]{0};
            \draw[-, thick] (10,-0.2)--(10,0.2) node[below, yshift=-0.4cm]{1};

            \draw[-, kit-blue] (0.8,-0.15)--(0.8,0.15) node[below, yshift=-0.4cm]{$\tfrac{1}{n}$};
            \draw[-, kit-blue] (1.6,-0.15)--(1.6,0.15) node[below, yshift=-0.4cm]{$\tfrac{2}{n}$};
            \draw[-, kit-blue] (6.2,-0.15)--(6.2,0.15) node[below, yshift=-0.4cm]{$\tfrac{i}{n}$};
            \draw[-, kit-blue] (7,-0.15)--(7,0.15) node[below, yshift=-0.4cm]{$\tfrac{i+1}{n}$};
            \draw[-, kit-blue] (9.2,-0.15)--(9.2,0.15) node[below, yshift=-0.4cm]{$\tfrac{n-1}{n}$};
            \draw[-, kit-blue] (8.4,-0.15)--(8.4,0.15) ;

            \node[kit-blue] (xi) at   (6.3,  0.45){$\coord_i$} ;
            \node[kit-blue] (xone) at (0.9,  0.45){$\coord_1$} ;
            \node[kit-blue] (xone) at (0.0,  0.45){$\coord_0$} ;
            \node[kit-blue] (xn) at   (9.1,  0.45){$\coord_{n-1}$} ;
            \node[kit-blue] (xn) at   (10.0, 0.45){$\coord_{n}$} ;

            \node[kit-blue] at (2.2, -0.6){$...$};
            \node[kit-blue] at (5.6, -0.6){$...$};
            \node[kit-blue] at (8, -0.6){$...$};
            \node[kit-blue] at (7.7, 0.3){$...$};
            \node[kit-blue] at (5.6, 0.3){$...$};
            \node[kit-blue] at (2.2, 0.3){$...$};

            \draw [pattern=north west lines, pattern color=red] (0.8,-0.15)--(0,-0.15) -- (0, 0.15) -- (0.8,0.15) --(0.8,-0.15) ;
            \draw [pattern=north west lines, pattern color=red] (9.2,-0.15)--(10,-0.15) -- (10, 0.15) -- (9.2,0.15) --(9.2,-0.15) ;
        \end{tikzpicture}
        \caption{Modelling of finite difference discretization. The red areas depict the intervals fully determined by the boundary conditions.}
        \label{fig:motivation}
\end{figure}
    Note that in this representation the boundary nodes $x_0$ and $x_n$ do not appear explicitly, as illustrated in \Cref{fig:motivation}. In the view of \Cref{fig:bcsystems}, the discretized system~\eqref{eq:AnBn_DirichletHeat} is of type~\sysD, and we immediately observe that \sysD is a control system (cf.~\Cref{def:control_system}) while \sysA constitutes a boundary control system (cf.~\Cref{def:boundary_control_ssystem}). To bridge this gap, we introduce an intermediate finite-dimensional boundary control system (system \sysC in \Cref{fig:bcsystems}) that can then be transformed to the control system \sysD using a Fattorini-type transformation. 

For our example of the heat equation, the intermediate system \sysC is given by the matrices 
    \begin{align} 
        \label{eq:SysC_DirichletHeat_Ainit}
        \Aninit &= \diffconst n^2 \begin{bmatrix}
            1 & -2 & 1 & 0 & \cdots & 0 \\
            0 & 1 & -2 & 1 & \cdots & 0 \\
            \vdots & \vdots & \vdots & \ddots & &\vdots \\
            0 & 0 & 0 & \cdots& -2 & 1\\
        \end{bmatrix}\in \R^{(n-1) \times (n+1)}, \\ 
        \label{eq:SysC_DirichletHeat_bopn}
        \bopn &= \begin{bmatrix}
            1 & 0 & \dots & 0 & 0 \\
            0 & 0 & \dots & 0 & 1
        \end{bmatrix} \in \R^{2 \times (n+1) }, \\ 
        \label{eq:SysC_DirichletHeat_descMatrix}
        \descMatrix &= \begin{bmatrix}
            0 & 1 & 0 & \cdots & 0 & 0\\
            0 & 0 & 1 & \cdots & 0 & 0\\
            \vdots & \vdots &  & \ddots & \vdots & \\
            0 & 0 & 0 & \cdots & 1 & 0\\
        \end{bmatrix} \in \R^{(n-1) \times (n+1)},
    \end{align}
    which is consistent in the sense that $\Anbase = \Aninit |_{\ker \bopn}$.
    We can interpret $\Aninit$ as the same finite-difference discretization as $\Anbase$, except that the boundary information has not yet been inserted and $\bopn$ as a discretization of the boundary operator $\bop$. 
    
For the application of the Fattorini trick first observe $\ker \bopn = \R^{n+1}\setminus \mathrm{span} \left\{ \unitvector{1}, \unitvector{n+1}\right\}$, and hence
    \begin{gather*}
        \Anbase = \Aninit |_{\ker \bopn}, \quad
        \bopnrinv = \begin{bmatrix}
            1 & 0 \\
            0 & \vdots \\
            \vdots  & 0 \\
            0 & 1
        \end{bmatrix} \in \R^{(n+1) \times 2},\\
        \Bnshort = \Aninit \bopnrinv - \Anbase \Rn \bopnrinv = \diffconst n^2 \begin{bmatrix}
            1 & 0\\
            0 & \vdots\\
            \vdots & 0\\
            0 & 1
        \end{bmatrix} \in \R^{(n-1) \times 2},
    \end{gather*} 
    where $\Rn$ is the canonical projection from $\R^{n+1}$ to $\R^{n-1}$.
    
\subsection{Convergence of the boundary operator}
\label{subsec:convergenceBounderyOp}

Building on the previous example, we now formalize the transformations, which are summarized in \Cref{fig:bcsystems}. Systems \sysA and~\sysB depict the infinite-dimensional system before and after applying the Fattorini trick, respectively. For each, we consider corresponding approximate systems, namely \sysC and \sysD, respectively. Since these discretizations are tightly connected, several consistency assumptions need to be made, such as $\tildeEn=\En \Rn$, which are stated to full extent in \Cref{thm:bound_cont_approx}. 

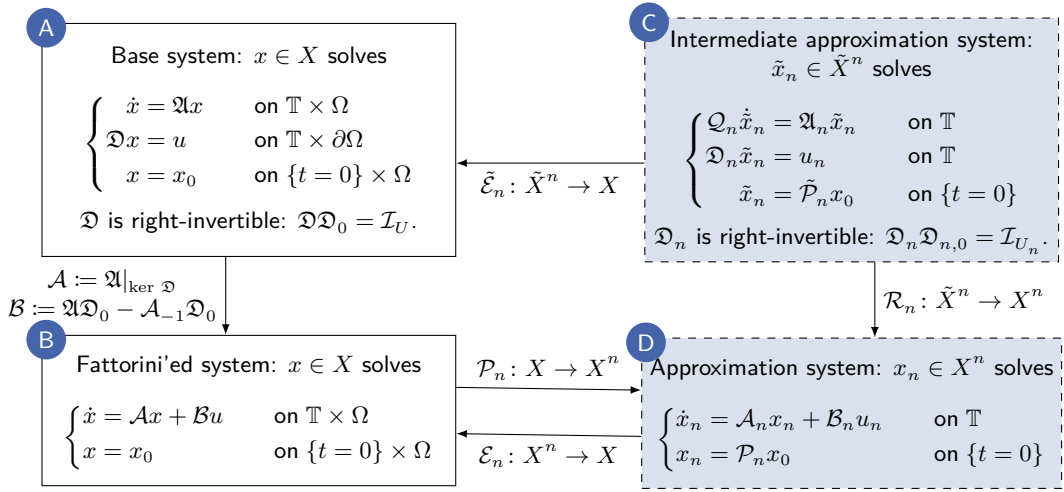
\begin{figure}[t]
    \centering
    \begin{tikzpicture}[ 
        node distance = 1cm, 
        node font = {\footnotesize\sffamily},
        >=latex,block/.style={
            draw, 
            fill=white, 
            rectangle, 
            minimum height=3em, 
            minimum width=8em},
        ]
        
        \node[block, minimum width=5.5cm, minimum height=3.1cm] (baseeq) {\makecell{Base system: $\state \in X$ solves \\[1em] $
        \left\lbrace
            \begin{aligned}
                \statedt &= \Ainit \state \quad &\text{on}\;& \timeInt \times \Omega \\
                \bop \state &= \inp \quad& \text{on}\; &\timeInt \times \partial\Omega  \\
                \state &= \stateinit \quad& \text{on}\; &\{t=0\} \times \Omega
            \end{aligned}\right.
        $\\[2em] $\bop$ is right-invertible: $\bop\boprinv = \mathcal{I}_U$.}};

        \node[block, dashed, node distance=2.5cm, right =of baseeq,  minimum width=5.5cm, minimum height=3.2cm, fill=kit-blue!20] (intuitive) {\makecell{Intermediate approximation system:\\ $\stateCn \in \tildeXn$ solves \\[1em] $
        \left\lbrace
            \begin{aligned}
                \descMatrix \stateCndt &= {\Aninit} \stateCn \quad &\text{on}\;& \timeInt  \\
                \bopn \stateCn&=\inpn \quad& \text{on}\; &\timeInt \\
                \stateCn&= \tildePn \stateinit \quad& \text{on}\; &\{t=0\} 
            \end{aligned}\right.
        $\\[2em] $\bopn$ is right-invertible: $\bopn\bopnrinv = \mathcal{I}_{U_n}$.}};

        \node[block, dashed, below =of intuitive, minimum width=5.5cm, minimum height=2cm, fill=kit-blue!20] (final) {\makecell{Approximation system: $\stateDn \in \Xn$ solves \\[1em] $\left\lbrace
            \begin{aligned}
                \stateDndt &= {\Anbase} \stateDn + \Bnshort \inpn \quad &\text{on}\;& \timeInt  \\
                \stateDn&= \Pn \stateinit \quad& \text{on}\; &\{t=0\} 
            \end{aligned}\right.
        $}};

        \node[block, below of=baseeq, node distance=3.6cm, minimum width=5.5cm, minimum height=2cm] (disteq) {\makecell{Fattorini'ed system: $\state \in X$ solves \\[1em] $    \left\lbrace
            \begin{aligned}
                \statedt &= \mathcal{A} \state + \Bshort \inp \quad &\text{on}\;& \timeInt \times \Omega \\
                \state &= \stateinit \quad& \text{on}\; &\{t=0\} \times \Omega
            \end{aligned}\right.
        $}};

        \begin{scope}[transform canvas={xshift=2.5em, yshift=-2.5em}]
            \node[above left= of disteq, circle,fill=kit-blue,inner sep=0pt,minimum size=15pt, text=white] (A) {B};
            \node[above left= of baseeq, circle,fill=kit-blue,inner sep=0pt,minimum size=15pt, text=white] (B) {A};
            \node[above left= of intuitive, circle,fill=kit-blue,inner sep=0pt,minimum size=15pt, text=white] (C) {C};           
            \node[above left= of final, circle,fill=kit-blue,inner sep=0pt,minimum size=15pt, text=white] (D) {D};
        \end{scope}

        \begin{scope}[transform canvas={xshift=-.8em}]
            \draw[latex-] (disteq.north) to node [left] {\makecell{$\mathcal{A} \coloneqq \Ainit|_{\ker\, \mathfrak{D}}$\\$\Bshort \coloneqq \Ainit \boprinv - \mathcal{A}_{-1} \boprinv$}} (baseeq.south);

        \end{scope}
        \begin{scope}[transform canvas={xshift=.8em}]

            \draw[-latex, xshift = 4pt] (intuitive.south) to node [auto] {${\Rn}\colon  \tildeXn \rightarrow \Xn$} (final.north);
        \end{scope}

        \begin{scope}[transform canvas={xshift=0cm}]
            \draw[latex-] ([yshift=-.8em]disteq.east) to node [below] { $\En \colon \Xn \rightarrow X$}([yshift=-.8em]final.west);
            \draw[-latex] ([yshift=.8em]disteq.east) to node [above] { $\Pn \colon X \rightarrow \Xn$} ([yshift=.8em]final.west);
            
            \draw[latex-] ([yshift=-.8em]baseeq.east) to node [below] {$\tildeEn\colon \tildeXn \rightarrow X$} ([yshift=-.8em]intuitive.west);
        \end{scope} 



    \end{tikzpicture}    
    \caption{Systems under consideration.} \label{fig:bcsystems}
\end{figure}

In practice, one often omits \sysC and works directly with the discretization \sysD. Nevertheless, we want to emphasize again that \sysC is essential for the following results: it preserves the structure of \sysA while allowing us to track precisely how the Fattorini transformation carries over to the discrete level. We call \sysC the pre-boundary closure discretization.

\begin{theorem}\label{thm:bound_cont_approx}
    Given a setting as described in \Cref{problem:bcs} and the transformations as introduced in \Cref{fig:bcsystems} fulfilling 
    \begin{enumerate}
            \item $\tildeEn = \En\Rn$,
            \item $\range\left(\descMatrix\right) \subseteq \Xn$ and $\range\left(\Aninit\right) \subseteq \Xn$, and
            \item $\Anbase = \Aninit |_{\ker \bopn}$ generates a strongly continuous semigroup.
    \end{enumerate}
    Further, assume that
    \begin{enumerate}[label=(B\arabic*)]
        \addtocounter{enumi}{4}
        \item the right inverses of the boundary operators converge, i.e., $\tildeEn \bopnrinv\narrowinftySOT{U}{\Xalpha} \boprinv$, \label{thm:ass:conv-bopnrinv}
        \item there is a $C$ independent of $n$ such that $\|\Aninit \bopnrinv \|_{\linop{U}{\Xn}} \le C$ for all $n\in \N$, and \label{thm:ass:bopnrinv-uniformly-bounded}
        \item \label{ass:conv_bcs_1} $\En \Anmin^{-1} \Aninit \bopnrinv \narrowinftySOT{U }{X} \Abase^{-1} \Ainit \boprinv$.
    \end{enumerate}
    Then, 
    \begin{equation}
        \label{eq:approximation_bn_in_Xminalpha}
        \En \Bnshort \narrowinftySOT{U}{\Xminalpha} \Bshort .
    \end{equation}
\end{theorem}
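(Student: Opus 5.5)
The plan is to reduce \eqref{eq:approximation_bn_in_Xminalpha} to statements about the right inverses $\boprinv$, $\bopnrinv$ and the operators $\Ainit\boprinv$, $\Aninit\bopnrinv$, which is what \ref{thm:ass:conv-bopnrinv}--\ref{ass:conv_bcs_1} control. By \Cref{lem:fattorini-trick} we have $\Amin^{-1}\Bshort = \Abase^{-1}\Ainit\boprinv - \boprinv$. Hence
\begin{equation*}
(-\Abase)^{-1+\alpha}\Bshort = -(-\Abase)^{\alpha}\bigl(\Abase^{-1}\Ainit\boprinv - \boprinv\bigr).
\end{equation*}
At the discrete level I use $\Bnshort = \Aninit\bopnrinv - \Anbase\Rn\bopnrinv$, as in \Cref{example:boundary-discretization}. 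Together with $\tildeEn=\En\Rn$ this gives
\begin{equation*}
\En\Anmin^{-1}\Bnshort = \En\Anmin^{-1}\Aninit\bopnrinv - \tildeEn\bopnrinv .
\end{equation*}
The aim is therefore to show that, for every $u\in U$, the vector $(-\Abase)^{-1+\alpha}\En\Bnshort u$ converges in $X$ to $(-\Abase)^{-1+\alpha}\Bshort u$.

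\textbf{Step 1: the boundary lift.} The term $\tildeEn\bopnrinv u$ converges to $\boprinv u$ in $\Xalpha$ directly by \ref{thm:ass:conv-bopnrinv}, so $(-\Abase)^\alpha\tildeEn\bopnrinv u\to(-\Abase)^\alpha\boprinv u$ in $X$.

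\textbf{Step 2: the interior term.} I need to control $(-\Abase)^{\alpha}\En\Anmin^{-1}\Aninit\bopnrinv u$. Set $y_n\coloneqq \Aninit\bopnrinv u\in\Xn$, which by \ref{thm:ass:bopnrinv-uniformly-bounded} satisfies $\|y_n\|\le C\|u\|_U$. I would replace $(-\Abase)^{\alpha}\En\Anbase^{-1}$ by $-\En(-\Anbase)^{-1+\alpha}$ up to an error that is small in operator norm. The intended route is to write
\begin{equation*}
(-\Abase)^{\alpha}\En\Anbase^{-1} = (-\Abase)^{-1+\alpha}\bigl[(-\Abase)\En\Anbase^{-1}\bigr]
\end{equation*}
and use the consistency assumption \ref{ass:consistent_approx} to compare $\En\Anbase\Pn$ with $\Abase$ on the relevant ranges. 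Combining this with the norm convergence $\En(-\Anbase)^{-1+\alpha}\Pn\to(-\Abase)^{-1+\alpha}$ from \Cref{lem:frac_power_neg}, the error terms vanish because $(y_n)$ is bounded. It then remains to identify the limit of $\En(-\Anbase)^{-1+\alpha}y_n$ with $(-\Abase)^{\alpha}\Abase^{-1}\Ainit\boprinv u$.

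\textbf{Step 3: identifying the limit by interpolation.} Assumption \ref{ass:conv_bcs_1} gives convergence of $z_n\coloneqq\En\Anbase^{-1}y_n$ to $z\coloneqq\Abase^{-1}\Ainit\boprinv u$, but only in $X$. I would upgrade this to convergence in $\Xalpha$ with the moment inequality of \Cref{lem:interpolation-fractional-power}, using assumption \ref{ass:bcs:bip} to identify $\Xalpha$ with the complex interpolation space $[X,D(\Abase)]_\alpha$. The estimate I aim for is
\begin{equation*}
\|z_n-z\|_{\Xalpha}\le L\,\|z_n-z\|_X^{1-\alpha'}\,\|z_n-z\|_{X_{\alpha''}}^{\alpha'}
\end{equation*}
for suitable exponents. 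This reduces Step 3 to a uniform bound of $z_n$ in a strictly stronger fractional norm $X_{\alpha''}$ with $\alpha''>\alpha$. I would try to obtain that bound from the uniform graph-norm bound in \ref{ass:consistent_approx}, together with the uniform sectoriality \ref{ass:uniform-sectoriality}, applied to $\Anbase z_n$-type quantities.

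\textbf{Main obstacle.} The hardest step is this last one: $\En$ does not map into $D(\Abase)$, so a priori bounds in a norm stronger than $X_\alpha$ are not immediate. The operators $\En\Bnshort$ themselves blow up in $X$ (like $n^2$ in the example), so one cannot interpolate naively between $X_{-1}$ and $X$. If the interpolation route fails, I would fall back to working entirely with the fractional resolvent representation \eqref{eq:fractional_power_line_integral} along the path $\hat\path$. There I would apply \ref{ass:norm-convergence-resolvent-path} to the bounded sequence $y_n$ and use dominated convergence as in \Cref{lem:frac_power_neg}, so that $X$-convergence of $z_n$ and the uniform bound on $y_n$ suffice.
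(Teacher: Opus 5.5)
Your formulas for $\Amin^{-1}\Bshort$ and $\En\Anmin^{-1}\Bnshort$ are correct. However, Steps 1--3 only reach the target if $(-\Abase)^{-1+\alpha}\En\Bnshort u = -(-\Abase)^{\alpha}\En\Anmin^{-1}\Bnshort u$, that is, if $\Amin^{-1}\En=\En\Anbase^{-1}$ on $\range(\Bnshort)$. The extension $\En$ does not intertwine the generators. Since $\Bnshort u$ is unbounded in $n$, this commutator is not controlled by (A4).

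Use that $\Pn\En$ is the identity on $\Xn$ and that $\tildeEn=\En\Rn$. The correct expression is then
\begin{equation*}
\En\Bnshort u=\En\Aninit\bopnrinv u-\En\Anbase\Pn\,\tildeEn\bopnrinv u .
\end{equation*}
So the boundary-lift term is $\En\Anbase\Pn\tildeEn\bopnrinv u$, not $\Amin\tildeEn\bopnrinv u$. Assumption (B5) only gives $\Amin(\tildeEn\bopnrinv u-\boprinv u)\to0$ in $\Xminalpha$. The consistency error $(\En\Anbase\Pn-\Amin)\tildeEn\bopnrinv u$ is missing from your plan, and it carries the substance of the theorem.

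The paper's heat example shows this concretely. There $\Aninit\bopnrinv=0$ and $\tildeEn\bopnrinv=\boprinv$, so your decomposition would report zero error for every $n$. Yet $\En\Bnshort u\in X$, whereas $\Bshort u=-\Amin\boprinv u\notin X$ for $u\neq 0$.

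The paper handles exactly this term with (B3) and (B4):
\begin{itemize}
\item (B4) bounds $\En\Anbase\Pn-\Amin$ uniformly in $\linop{D(\Abase)}{X}$ and in $\linop{X}{\Xmin}$.
\item (B3) identifies $\Xalpha=[X,D(\Abase)]_\alpha$ and $\Xminalpha=[\Xmin,X]_\alpha$.
\item Exactness of complex interpolation then gives a uniform bound in $\linop{\Xalpha}{\Xminalpha}$.
\end{itemize}
A uniform bound alone gives boundedness, not convergence. To get convergence, combine it with $(\En\Anbase\Pn-\Amin)x\to0$ in $\Xminalpha$ for $x$ in the dense subset $D(\Abase)$ of $\Xalpha$. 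This follows from (B4), (A4), (TK2) and the moment inequality; the paper leaves this step implicit.

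For the remaining piece, the paper interpolates $\En\Aninit\bopnrinv u$ itself, not $z_n=\En\Anbase^{-1}\Aninit\bopnrinv u$. This quantity is bounded in $X$ by (B6) and converges to $\Ainit\boprinv u$ in $\Xmin$ by (B7) and (A4). The moment inequality between $\Xmin$ and $X$ then yields convergence in $\Xminalpha$.

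So the blow-up of $\En\Bnshort$ that made you abandon interpolation between $\Xmin$ and $X$ is avoided by splitting $\Bnshort$ first. Your fallback via \Cref{lem:frac_power_neg} amounts to the same argument for this piece. By contrast, the $\Xalpha$-control of $z_n$ you pursue in Step 3 cannot be extracted from the hypotheses: the range of $\En$ need not even lie in $\Xalpha$. So Step 3 as posed does not close.
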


\begin{figure}[b]
    \centering
    \begin{tikzpicture}[ 
        node distance = 1cm, 
        node font = {\footnotesize\sffamily},
        >=latex,block/.style={
            draw, 
            fill=white, 
            rectangle, 
            minimum height=3cm, 
            minimum width=5.5cm}]

        \node[block, dashed, fill=kit-blue!20] (intuitive) {\makecell{$\stateCn \in \tildeXn$ solves \\[1em] $
            \begin{aligned}
                \descMatrix \stateCndt &= \Aninit \stateCn \quad &\text{on}\;& \timeInt  \\
                \bopn \stateCn&=\inp \quad& \text{on}\; &\timeInt \\
                \stateCn&= \tildePn \stateinit \quad& \text{on}\; &\{t=0\} 
            \end{aligned}
        $\\[2em] $\bopn$ is right-invertible: $\bopn\bopnrinv = \mathcal{I}_{U}$.}};

        \node[block, dashed, right=2cm of intuitive, minimum width=8.5cm] (zntilde) {\makecell{$\stateEn \in \tildeXn$ solves \\[1em] $
            \begin{aligned}
                \descMatrix \stateEndt &= \Aninit \stateEn + \Aninit \bopnrinv \inp - \descMatrix \bopnrinv \inpdt\quad &\text{on}\;& \timeInt  \\
                \bopn \stateEn&=0 \quad& \text{on}\; &\timeInt \\
                \stateEn&= \tildePn \stateinit - \bopnrinv\inp \quad& \text{on}\; &\{t=0\} 
            \end{aligned} 
        $}};

        \node[block, dashed, below =of intuitive, fill=kit-blue!20, minimum height=2cm] (final) {\makecell{$\stateDn \in \Xn$ solves \\[1em] $
            \begin{aligned}
                \stateDndt &= {\Anbase} \stateDn + \Bnshort \inp \quad &\text{on}\;& \timeInt  \\
                \stateDn&= \Pn \stateinit \quad& \text{on}\; &\{t=0\} 
            \end{aligned}
        $}};

        \node[block, dashed, below =1cm of zntilde, minimum width=8.5cm, minimum height=2cm] (almostfinal) {\makecell{ $\stateFn \in \Xn$ solves \\[1em] $
            \begin{aligned}
                \stateFndt &= {\Anbase} \stateFn + \Aninit \bopnrinv \inp - \descMatrix \bopnrinv \inpdt&\text{on}\;& \timeInt  \\
                \stateFn&= \Pn \stateinit - \Rn\bopnrinv\inp\;& \text{on}\; &\{t=0\} 
            \end{aligned}
        $}};
        \begin{scope}[transform canvas={xshift=2.5em, yshift=-2.5em}]
            \node[above left= of intuitive, circle,fill=kit-blue,inner sep=0pt,minimum size=15pt, text=white] (C) {C};
            \node[above left= of zntilde, circle,fill=kit-blue,inner sep=0pt,minimum size=15pt, text=white] (E) {E};
            \node[above left= of almostfinal, circle,fill=kit-blue,inner sep=0pt,minimum size=15pt, text=white] (F) {F};
            \node[above left= of final, circle,fill=kit-blue,inner sep=0pt,minimum size=15pt, text=white] (D) {D};
        \end{scope}

        \begin{scope}[transform canvas={xshift=-.8em}]
            \draw[latex-] (intuitive.south) to node [left] {${\Fn}\colon {\Xn} \rightarrow \tildeXn$} (final.north);
        \end{scope}
        \begin{scope}[transform canvas={xshift=.8em}]
            \draw[-latex, xshift = 4pt] (zntilde.south) to node [auto] {${\Rn}\colon  \tildeXn \rightarrow \Xn$} (almostfinal.north);
            \draw[-latex, xshift = 4pt] (intuitive.south) to node [auto] {${\Rn}\colon  \tildeXn \rightarrow \Xn$} (final.north);
        \end{scope}


        \draw[-latex] (intuitive.east) to node [right, xshift=-0.9cm, yshift=0.5cm]{\makecell{$\begin{aligned}&\stateEn \coloneqq \stateCn - \\& \bopnrinv \inp\end{aligned}$}} (zntilde.west);
        \draw[latex-] (final.east) to node [right, xshift=-0.9cm, yshift=0.5cm]{\makecell{$\begin{aligned}&\stateDn = \stateFn +\\ & \Rn\bopnrinv \inp\end{aligned}$}} (almostfinal.west);

    \end{tikzpicture}    
    \caption{Fattorini trick applied to systems \sysC and \sysD from \Cref{fig:bcsystems}.} \label{fig:bcsystems_2}
\end{figure}
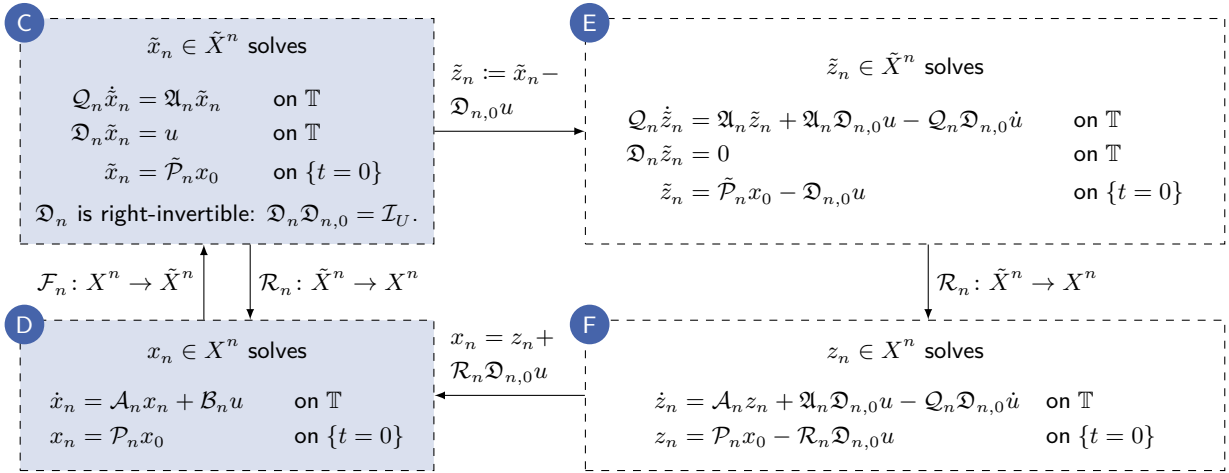

\begin{proof}
The proof is structured in three steps. First, we investigate how to apply the Fattorini trick to the intermediate approximation system and derive the resulting boundary operator $\Bnshort$. Second, knowing $\Bshort$ and $\Bnshort$ allows us to link the systems \sysA and \sysD. Third, we show the desired convergence~\eqref{eq:approximation_bn_in_Xminalpha}.

\textbf{Step 1: Fattorini on intermediate approximation system}\\
Invoking the Fattorini trick at the discrete level, the resulting family of systems is depicted in \Cref{fig:bcsystems_2}. The mild solution of \sysF reads
\begin{multline}
    \label{eq:fin_dim_fattorini}        
    \stateFn(t)= \Sn(t)(\Pn \stateinit - \Rn \bopnrinv \inp(0)) + \int_0^t  \Sn (t-s) \Aninit \bopnrinv \inp(s)\ds \\
    - \int_0^t  \Sn (t-s) \descMatrix\bopnrinv \inpdt(s)\ds.
\end{multline}
Integration by parts of the last summand yields
\begin{align*}
    \int_0^t  \Sn (t-s) &\descMatrix \bopnrinv \inpdt(s)\ds \\ 
    &= \left[\Sn (t-s) \descMatrix \bopnrinv \inp(s) \right]_0^t + \int_0^t  \Snmin (t-s) \Anmin\descMatrix \bopnrinv \inp(s)\ds \\
    &= \descMatrix \bopnrinv \inp(t)  - \Sn (t) \descMatrix \bopnrinv \inp(0)  + \int_0^t  \Snmin (t-s) \Anmin\descMatrix \bopnrinv \inp(s)\ds. 
\end{align*}
Substituting this expression into \eqref{eq:fin_dim_fattorini} yields
\begin{align*}
    \stateFn(t) &= \Sn(t)\Pn \stateinit + \int_0^t  \Sn (t-s) \Aninit \bopnrinv \inp(s)\ds - \Rn \bopnrinv \inp(t)  \\ & \phantom{=}- \int_0^t  \Snmin (t-s) \Anmin\Rn \bopnrinv \inp(s)\ds \\
    &= \Sn(t)\Pn \stateinit  + \int_0^t  \Snmin (t-s) \left[\Aninit -\Anmin\Rn \right] \bopnrinv \inp(s)\ds - \Rn \bopnrinv \inp(t),
\end{align*}
which we then revert to $\stateDn$ to obtain
\begin{align*}
    \stateDn(t)= \Sn(t) \Pn \stateinit  + \int_0^t  \Snmin(t-s)\left[\Aninit \bopnrinv -\Anmin\Rn \bopnrinv \right] \inpn(s)\ds.
\end{align*}
From this, we define the corresponding control operator $\Bnshort$ for the distributed control in system \sysD as
\begin{equation*}
    \Bnshort = \Aninit \bopnrinv -\Anmin\Rn \bopnrinv.
\end{equation*}
Since each space $\Xn$ is finite dimensional, the associated extrapolation space coincides with $\Xn$ itself (up to the choice of norm) and similarly for the involved operators. Hence,
\begin{equation*}
    \Bnshort = \Aninit \bopnrinv -\Anbase \Rn \bopnrinv.
\end{equation*}

\textbf{Step 2: Linking \sysA and \sysD}
The formulation of $\Bnshort$ allows us to systematically link the controls of systems \sysA and \sysD via 
\begin{align}
    (\Bshort - \En \Bnshort)u &= (\Ainit\boprinv  - \Amin  \boprinv)u - \En (\Aninit \bopnrinv - \Anbase \Rn \bopnrinv) u \\
    &= (\Ainit\boprinv - \En \Aninit  \bopnrinv)u  \label{eq:conv_bcs_1}\\
    &\phantom{=}\quad + (\En \Anbase \Rn \bopnrinv - \Amin \boprinv ) u.\label{eq:conv_bcs_2}
\end{align}

\textbf{Step 3: Convergence}
We will study the convergence of the terms in \eqref{eq:conv_bcs_1} and \eqref{eq:conv_bcs_2} separately.
To show the convergence of \eqref{eq:conv_bcs_1} in the $\Xminalpha$-norm, we employ \Cref{lem:interpolation-fractional-power}, which implies the existence of $L>0$ such that
\begin{align}
    \| (-\Abase)^{-1+\alpha} x \|_{X} &\le L \| (-\Abase)^{-1} x\|_X^{\frac{0+ (-1+\alpha)}{0+1}}\| x\|_X^{\frac{(-1+\alpha) +1 }{0 +1}} \nonumber\\ 
    &= L \| \Abase^{-1} x\|_X^{1-\alpha}\| x\|_X^{\alpha} = L \|x\|_{\Xmin}^{1-\alpha} \| x \|_{X}^{\alpha} \label{eq:interpolation-inequality-alpha-neg}
\end{align}
Consequently, the RHS in \eqref{eq:conv_bcs_1} converges to zero (in the $\Xminalpha$-norm), if
\begin{enumerate}
    \item $\| (\Ainit \boprinv - \En \Aninit \bopnrinv ) u \|_{\Xmin} \narrowinfty 0$, and \label{item:AninitBopnrinv-converge-Xmin}
    \item $\| (\Ainit \boprinv - \En \Aninit \bopnrinv ) u \|_{X} \leq C$ for all $n \in \N$ and $C$ independent of $n$\label{item:AninitBopnrinv-bounded-X}
\end{enumerate}
are satisfied. The second condition follows directly from assumptions \ref{ass:bcs:bop-right-invertible} and \ref{thm:ass:bopnrinv-uniformly-bounded}, while~\ref{item:AninitBopnrinv-converge-Xmin} follows from
\begin{align*}
    \phantom{=} \lim_{n\rightarrow \infty}\| (\Ainit \boprinv - \En \Aninit \bopnrinv ) u \|_{\Xmin} &\overset{\phantom{\text{\ref{ass:norm-convergence-resolvent-path}}}}{=} \lim_{n\rightarrow \infty}\| (\Abase^{-1} \Ainit \boprinv - \Abase^{-1} \En \Aninit \bopnrinv ) u \|_{X} \\
    &\overset{\text{\ref{ass:norm-convergence-resolvent-path}}}{=} \lim_{n\rightarrow \infty}\| (\Abase^{-1} \Ainit \boprinv - \En \Anbase^{-1} \Aninit \bopnrinv ) u \|_{X} = 0
\end{align*}
due to assumption \ref{ass:conv_bcs_1}.

To establish convergence of \eqref{eq:conv_bcs_2} to zero in the $\Xminalpha$-norm, we need to show boundedness of $\Enmin\Anmin \Pn$ to $\Amin$ as operators in $\linop{\Xalpha}{\Xminalpha}$ uniformly in $n\in \N$ since the strong operator convergence of $\En \Rn \bopnrinv$ to $\boprinv$ in $\linop{U}{\Xalpha}$ is already assumed in \ref{thm:ass:conv-bopnrinv}. Since we assumed that $\Abase \in \BIP(X)$ in \ref{ass:bcs:bip}, we employ the identification of the fractional power spaces with the complex interpolation spaces (cf.~\cite[Thm.~11.6.1]{MarS01}) to then show that there exists $C_1 > 0$ independent of $n$ such that 
\begin{equation}\label{eq:estimate-interpolation-spaces}
    \| \En \Anbase \Pn - \Anmin \|_{\linop{[X, D(\Abase)]_\alpha}{[\Xmin, X]_\alpha}} \le C_1
\end{equation}
for all $n\in \N$. Exactness of the interpolation, cf.~\cite[Sec.~4]{Cal64}, then allows us bound the norm in the interpolated space for an operator $\calF$ mapping from $D(A) + X$ to $X + \Xmin$ by 
\begin{equation*}
    \| \calF \|_{\linop{[X, D(\Abase)]_\alpha}{[\Xmin, X]_\alpha}} \le  \|\calF \|^{1-\alpha}_{\linop{X}{\Xmin}} \| \calF \|^\alpha_{\linop{D(\Abase)}{X}}.
\end{equation*}
Applying this to \eqref{eq:estimate-interpolation-spaces} yields that we need to show
\begin{enumerate} 
    \item that there exists $C_2 > 0$ such that 
    \begin{equation*}
        \| \En \Anbase \Pn - \Abase\|_{\linop{D(\Abase)}{X}} = \|\Id - \En\Anbase\Pn \Abase^{-1}  \|_{\linopo{X}} \le C_2,
    \end{equation*}
    \item and that there exists $C_3 > 0$ such that \label{item:Amin-convergence-Xmin}
    \begin{equation*} 
        \| \En \Anbase \Pn - \Amin \|_{\linop{X}{\Xmin}} =  \| \Abase^{-1} \En \Anbase \Pn - \Id \|_{\linopo{X}} \le C_3.
    \end{equation*} 
\end{enumerate}
Both conditions follow directly from assumption \ref{ass:consistent_approx}, which concludes the proof.
\end{proof}

\begin{remark}
    The construction of a right-inverse for the discrete boundary operator $\bopn$ is not spelled out in either the proof or the statement of the theorem. A convenient way to obtain $\bopnrinv$ is to proceed in the following three steps:
    \begin{enumerate}
        \item Construct a discretized boundary control $\inpn(t) \coloneqq \PnU \inp(t)$ with $\PnU \colon \InpSpace \rightarrow \InpnSpace$ being a suitable restriction operator.
        \item Construct a matrix representation of $\bopn$ and compute the right inverse $\bopnrinvU$ using standard linear algebra.
        \item Compute $\bopnrinv \coloneqq \bopnrinvU \PnU$.
    \end{enumerate}
    
    The proof of the previous theorem can be adapted to use $\inpn \in \InpnSpace$ and a suitably adapted convergence criterion
    \begin{equation*}
        \|(\En \bopnrinvU \PnU - \boprinv) u\|_{\Xalpha} \narrowinfty 0.
    \end{equation*}
\end{remark}

\section{Numerical example: Dirichlet controlled heat equation}
\label{sec:numerics}


We continue the investigation of the heat equation with Dirichlet boundary control~\eqref{eqn:heat} from \Cref{example:boundary-discretization}, following similar discussions as in \cite[Sec.~4.1]{ItoK98} and \cite[Sec.~4.1]{HilU25b}.

We consider the problem on $X = L^2(\Omega)$ with $\Ainit \state = \partial^2_\coord \state$ on $D(\Ainit) = H^2(\Omega)$. The boundary conditions are formally enforced by the Dirichlet trace operator $\bop \colon H^1(\Omega) \rightarrow \R^2$ and thus $\ker \bop = H_0^1(\Omega)$ and  $D(\Abase) = H^2(\Omega) \cap H_0^1(\Omega)$. We use the right-inverse of the boundary trace given by 
\begin{equation*}
    \boprinv \colon \R^2 \mapsto D(\Ainit), \qquad \begin{bmatrix}
        u_0\\ u_1
    \end{bmatrix} \mapsto f(x) \coloneqq u_1\, x + u_0\, (1-x).
\end{equation*}
For the approximation, we divide $[0,1]$ into $n$ equally sized intervals as in \Cref{example:boundary-discretization} with $\Xn = (\R^{n-1}, \|\staten\|_{\Xn}^2 \coloneqq \deltacoord^2 \sum_{k=1}^{n-1}|\statenidx{k}|^2)$, and
\begin{equation*}
    \begin{aligned}
        (\En g) (\coord) &\coloneqq \sum_{k=1}^{n-1} g_k B_{n, k}(\coord), \qquad  &
        (\Pn f)_{k} & \coloneqq f(\coord_k)
        , \hspace{1.5em} 1 \le k \le n-1,
    \end{aligned}
\end{equation*}
with $B_{n,k}(\coord)$ being the first order B-spline defined by 
\begin{equation*}
    B_{n, k}(\coord) = \frac{1}{\deltacoord}\begin{cases}
        (\coord - \coord_{n, k-1}), \quad &\text{if } \coord\in [\coord_{n, k-1}, \coord_{n, k}],\\
        (\coord_{n, k+1} - \coord), \quad &\text{if } \coord\in [\coord_{n, k}, \coord_{n, k+1}],\\
        0, &\text{else.}
    \end{cases}
\end{equation*}
The discretized operators $\Anbase$ and $\Bnshort$ are given by \eqref{eq:AnBn_DirichletHeat}. Further, to verify all prerequisites of \Cref{thm:bound_cont_approx}, we use again the intermediate discretization \sysC from \Cref{fig:bcsystems} characterized by \eqref{eq:SysC_DirichletHeat_Ainit}-\eqref{eq:SysC_DirichletHeat_descMatrix} on $\tildeXn = \R^{n+1}$ with $\tildeEn$ and $\Rn$ given by 
\begin{equation*}
    \begin{aligned}
        (\tildeEn \tilde{g}) (y) &\coloneqq \sum_{k=0}^{n} \tilde{g}_{k} B_{n, k}(\coord),\qquad & 
        (\Rn \tilde{g} )_k & \coloneqq \tilde{g}_k , \hspace{1.5em} 1 \le k \le n-1,
    \end{aligned}
\end{equation*}
where $B_{n,k}$ was extended to the boundary by
\begin{equation*}
    B_{n, 0}= \begin{cases}
        (\coord_{n, 1}- \coord), & \text{ if } \coord \in [\coord_{n, 0}, \coord_{n, 1}]\\ 
        0 & \text{else, }
    \end{cases} \quad \text{ and }
    B_{n,n} = \begin{cases}
        (\coord_{n, n}- \coord), & \text{ if } \coord \in [\coord_{n, n-1}, \coord_{n, n}]\\ 
        0 & \text{else. }
    \end{cases}
\end{equation*}
With these preparations, we now verify the remaining assumptions of \Cref{thm:bound_cont_approx}.
\begin{enumerate}[label=(A\arabic*)]
    \item The sequence of Banach spaces $(\Xn)_{n\in\N}$ fulfills \Cref{ass:approximating-banach-spaces} as shown in \cite[Ex. 4.1, Case 2]{ItoK98}.
    \item The operators $-\Abase$ and $-\Anbase$ are self-adjoint and positive, and thus sectorial of angle $\sectorbound =0$. Hence, for $\sectorboundmoved \in (\sectorbound, \pi)$, we obtain
    \begin{align*}
        \sup_{\lambda \in \C \setminus \overline{\othersector_{\sectorboundmoved}}} \|\lambda \res{\lambda}{-\Abase}\|_X &\le \sup_{\lambda \in \C \setminus \overline{\othersector_{\sectorboundmoved}}, \sigma \in \othersector_{\sectorbound}} \|\frac{\lambda}{\lambda-\sigma}\|_X = \sup_{\lambda \in \C \setminus \overline{\othersector_{\sectorboundmoved}}, \sigma \in \othersector_{\sectorbound}} \|\frac{1}{1-\tfrac{\sigma}{\lambda}}\|_X \\
        &= \sup_{v \in \C \setminus \overline{\othersector_{\sectorboundmoved-\sectorbound }}} \|\frac{1}{1-v}\|_X \le \frac{1}{\sin(\sectorboundmoved-\sectorbound)},
    \end{align*}
    which also applies for $-\Anbase$ for all $n\in\N$, since the estimation relies only on the spectrum lying in $\othersector_\sectorbound$. In the last line, we used that the distance of $v\in \C \setminus \overline{\othersector_{\sectorboundmoved-\sectorbound }}$ from 1 is uniformly bounded from below by $\sin(\sectorboundmoved-\sectorbound)$. Hence, the operators $-\Abase$ and $-\Anbase$ for $n \in \N$ are uniformly sectorial of angle $\sectorbound =0$.
    \item The sequence of semigroups $\{\Sn(t)\}_{t\ge 0}$ generated by $\Anbase$ converges against $\{\S(t)\}_{t\ge 0}$ generated by $\Abase$ according to \cite[Lem. 4.1]{HilU25b}.
    \item Norm-convergence of the resolvents follows analogously to \cite[Thm. 3.1]{FujM76}. 
\end{enumerate}
\begin{enumerate}[label=(B\arabic*)]
    \item Follows immediately from (A2).
    \item By construction, the operator is bounded into $D(\Ainit) = H^2(\Omega)$ and we can easily verify that $\bopnrinv \in \linop{\R^2}{D(\Abase^{1/2})} = \linop{\R^2}{H^1(\Omega)}$.
    \item To show the consistency of the approximation we have to show boundedness of two terms. In the following we use the generic, $n$-independent constant $C>0$ for all estimations. 
    \begin{itemize}
        \item $\| (\Abase - \En \Anbase \Pn ) \Abase^{-1} \state \|_{X} \le C \|\state\|_X$, which follows from elliptic regularity. Let $\Abase f = \state$, then 
        \begin{align*}
            \hspace{1.7cm} \|(\Abase - \En \Anbase \Pn )\Abase^{-1} \state \|_{X} &=\| (\Abase - \En \Anbase \Pn ) f \|_{X} \le C n^{-2} \| \Abase f \|_X \\& \leq C \| \state \|_X.
        \end{align*}
        \item We show $\| \Abase^{-1} (\Amin - \En \Anbase \Pn) \state \|_{X} \le C \|\state\|_X$ and yield that by duality $\Abase \in \linop{L^2(0,1)}{H^{-2}(0,1)}$ is a bijective bounded operator such that 
        	\begin{align*}
        		\Abase^{-1} \in \linop{H^{-2}(0,1)}{L^2(0,1)}.
        	\end{align*}
        	Similarily, we can show for the discretized operators
        	\begin{align*}
        		\Anbase \Pn \in \linop{L^2(0,1)}{H_n^{-2}(0,1)},
        	\end{align*}
        	where $H_n^{-2}(0,1)$ is $\R^{n-1}$ with a suitably weighted $l_2$-norm such that the growth of $\Anbase$ with n is canceled out. Since $\En$ maps boundedly from $H_n^{-2}(0,1)$ to $H^{-2}(0,1)$, we find that $\En \Anbase \Pn$ is bounded from $L^2(0,1)$ to $H^{-2}(0,1)$ uniformly in $n$ and hence the difference is bounded as well.
    \end{itemize}
    \item Since the negative Laplace operator with homogeneous Dirichlet boundary conditions is an injective, positive, and self-adjoint operator on a Hilbert space, $\Abase \in \BIP(X)$ follows directly from \cite[Ex.~7.3.2]{MarS01}.
    \item Using
    \begin{equation*}
        \bopnrinv \coloneqq \frac{1}{n}\begin{bmatrix}0 & 1 & 2 & \dots & n \\
            n& n-1 & \dots & 1 & 0
        \end{bmatrix}\qquad\text{for all $n\in\N$},
    \end{equation*}
    we find, using the piecewise linear interpolation $\tildeEn$, that $\tildeEn \bopnrinv - \boprinv = 0$ for all $n\in \N$ and hence the condition is fulfilled.
    \item By construction we have $\Aninit \bopnrinv = 0$.
    \item Follows immediately from $\Ainit \boprinv = 0$, which is due to the fact that the second derivative of a linear function is identically zero.
\end{enumerate}

As a consequence of checking all these conditions, we conclude that  \ref{thm:ass:conv-bopnrinv} is satisfied, i.e., that $\En \Bnshort$ converges in the strong operator topology to $\Bshort$ (with $\alpha = 1/2$) and further \Cref{thm:dist_contr} to approximate \ISS gains from approximations. In \Cref{fig:heat_dirichlet}, one can numerically observe the convergence of the required parameters, and we use $\hat{\omega} = 9.8647$, $\hat{D} = 0.9991$, and $\| (-\Anbase)^\alpha \Bnshort \|_{\linop{U}{\Xn}} \leq 1.4136$ to obtain 
\begin{equation*}
    K_1 \approx 3.1408, \quad \quad K_2 \approx 0.5626, \quad\text{ and }\quad \kappa \approx 0.6359.
\end{equation*}
Consequently, \Cref{thm:dist_contr} implies that valid choices of the \ISS gains for the heat equation with control on both boundaries are given by 
\begin{equation*}
    \beta(s, t) = \mexp^{-\pi^2 t}\, s , \text{ and } \gamma(s) = 0.8989 \, s.
\end{equation*}
In particular the function $\gamma$ closely resembles the result from \cite{JacNPS18}, where rigorous analysis for one-sided control was used to determine $\gamma(s) = \sqrt{\tfrac{2}{3}} s \approx 0.8165$.

\begin{figure}[ht]
    \centering
    \begin{subfigure}[t]{0.3\linewidth}
        \begin{tikzpicture}
            \begin{axis}[
                xlabel={n},
                ylabel={$-\omega_n$},
                xmin=0, xmax=4000,
                xtick distance = 2000,
                ymin=-9.9, ymax=-9.7, 
                legend pos=north east,
                ymajorgrids=true,
                grid style=dashed,
                width=\linewidth,
                height=\linewidth,
                tick label style={font=\footnotesize},
                label style={font=\footnotesize},
                legend style={font=\footnotesize}
            ]
            
            \addplot[line width=1pt,solid,color=kit-blue] %
            table[x=n,y=omegan,col sep=comma]{heat_dirichlet_1d.csv};
            \addlegendentry{$-\omega_n$}
            \addplot[
                color=kit-green,
                ]
                coordinates {
                (0, -9.8696044) (4000, -9.8696044)
                };
                \addlegendentry{$-\hat{\omega}$}
            \end{axis}
        \end{tikzpicture}
        \caption{ The growth bound of the discretized semigroups $\|\Sn(t)\|_{\Xn} \le \mexp^{-\omega_n t}$. The reference growth bound $-\hat{\omega}$ \cite[VI.8.9]{EngN00}.}
        \label{fig:heat_equation_growth_bound}
    \end{subfigure}%
    \hfill
        \begin{subfigure}[t]{0.3\linewidth}
        \begin{tikzpicture}
            \begin{axis}[
                xlabel={n},
                ylabel={$D_n$},
                xmin=0, xmax=4000,
                xtick distance = 2000,
                yticklabel style={/pgf/number format/.cd,fixed,precision=4},
                ymin= 0.999, ymax=0.9992,
                legend pos=north east,
                ymajorgrids=true,
                grid style=dashed,
                width=\linewidth,
                height=\linewidth,
                tick label style={font=\footnotesize},
                label style={font=\footnotesize},
                legend style={font=\footnotesize}
            ]
            \addplot[line width=1pt,solid,color=kit-blue] %
            table[x=n,y=Dn,col sep=comma]{heat_dirichlet_1d.csv};
            \addlegendentry{$D_n$}
            \end{axis}
        \end{tikzpicture}
        \caption{The resolvent bounds $D_n = \sup_{\lambda \in \R_+} (\lambda+1)\|\res{\lambda}{-\Anbase}\|_{\Xn}$.}
        \label{fig:resolvent-bound-heat-equation}
    \end{subfigure}%
    \hfill
    \begin{subfigure}[t]{0.3\linewidth}
        \begin{tikzpicture}
            \begin{axis}[
                xlabel={n},
                ylabel={$\| (-\Anbase)^\alpha \Bnshort \|_{\linop{U}{\Xn}}$},
                xtick distance = 2000,
                xmax = 4000,
                ymajorgrids=true,
                grid style=dashed,
                xmin = 0,
                width=\linewidth,
                height=\linewidth,
                tick label style={font=\footnotesize},
                label style={font=\footnotesize},
                legend style={font=\footnotesize}
            ]
            \addplot[line width=\lineWidth,solid,color=kit-blue] 
            table[x=n,y=AnalphaBnnorm,col sep=comma]{heat_dirichlet_1d.csv};
            \end{axis}
        \end{tikzpicture}
        \caption{The boundary operator norm in the fractional power space $\| (-\Anbase)^\alpha \Bnshort \|_{\linop{U}{\Xn}}$. \label{fig:bn}}
    \end{subfigure}
    \caption{Values for the growth bound, the resolvent bound, and the norm of the boundary operator in the fractional power space for various equidistant discretizations of $[0, 1]$ into $n$ intervals.}
    \label{fig:heat_dirichlet}
\end{figure}
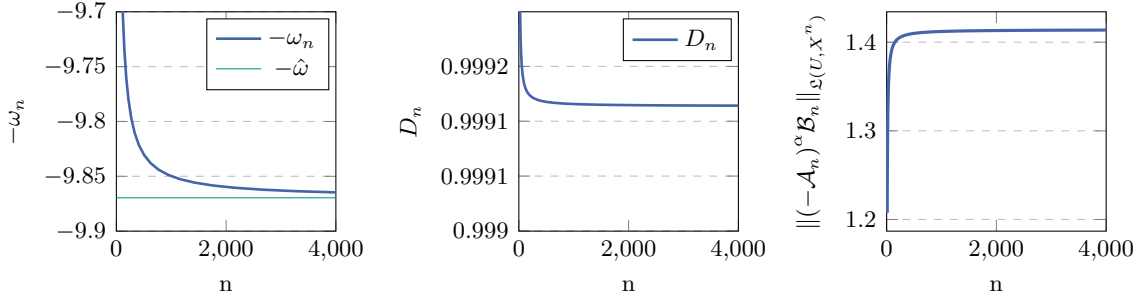

\section{Discussion}
\label{sec:discussion}

We showed for an $\Linfty$-admissible analytic control system with an unbounded control operator how to obtain explicit \ISS-gain estimates relying on the properties of a sequence of finite-dimensional approximations. The required assumptions consist of Trotter--Kato-type conditions together with additional convergence requirements for both the control operator and the generator’s resolvent. Moreover, we showed that when the unbounded control operator is the result of the Fattorini trick applied to a boundary control system, we can conclude the convergence of the control operators in the interior of the domain based on the convergence of a pre-boundary closure discretisation. 

Before continuing to focus on limitations and future research directions in the context of \ISS, we put the introduction of the pre-boundary closure discretization into a broader context. The pre-boundary closure discretization offers a first step towards a systematic approach of switching between the boundary trace formulation and the abstract Cauchy problem from a numerical analysis point of view. Both views have distinct advantages: the boundary trace formulation is well-suited for energy estimates, regularity arguments, and the explicit treatment of boundary data, while the abstract Cauchy problem setting is beneficial for operator-theoretic approaches, semigroup theory, and the design of robust controls. Hence, having a rigorous bridge between them might be advantageous also for other applications beyond \ISS. 

The Riccati operator theory provides an alternative way to obtain \ISS gains for linear infinite-dimensional systems \cite{LasT00}. This approach requires solving an operator Riccati equation, which is in general a challenging task (cf.~\cite[Ch.~9, Ex.~9.2.14]{CurZ20}). By contrast, the presented approach avoids using the Riccati equation and directly approximates the required parameters from the system semigroup and the control operator. A natural next step is to explore whether the discretisation framework for boundary control systems can be combined with the Riccati-based approximation results of \cite{LasT00}.

For future work, we envision broadening the applicability of our results to encompass more general boundary conditions, to address hyperbolic problems, and to include nonlinear or non-autonomous systems.
To elaborate the first point further, a systematic investigation of the assumptions (A*) and (B*) for Neumann or mixed boundary conditions is required. For conforming finite elements in 2D with Dirichlet boundaries, a norm-resolvent convergence result is available, for example, in \cite{FujM76}, which is missing or not easily accessible for applied mathematicians for other types of boundary conditions. 
Secondly, the approximation results can not be applied to hyperbolic problems due to the restriction to sectorial operators. However, specific hyperbolic problem groups might possess sufficient regularity properties to allow adapting the presented approach. 
Finally, extensions in the direction of semilinear systems and in the direction of non-autonomous systems, for which an extension of the Fattorini trick is readily available \cite{JacL21}, are of interest for applications.

\bibliographystyle{plain-doi} 
\bibliography{journalAbbr,literature}

\subsection*{Acknowledgments} The authors thank Prof.~Birgit~Jacob for valuable input and feedback on first ideas and Prof.~Roland~Schnaubelt for discussions on later versions of the results. Both authors acknowledge funding by the Deutsche Forschungsgemeinschaft (DFG, German Research Foundation) – Project-ID 258734477 – SFB 1173. BH additionally acknowledges funding from the DFG under Germany’s Excellence Strategy – EXC 2075 – 390740016 and from the International Max Planck Research School for Intelligent Systems (IMPRS-IS). Major parts of this manuscript were written while both authors were affiliated with the University of Stuttgart. GPT-5 was used to improve language and readability.

\subsection*{Credit author statement} BH: Conceptualization, methodology, formal analysis, writing - original draft; BU: Funding acquisition, writing - review \& editing.

\end{document}